\newtheorem{theorem}{Theorem}
\newtheorem{lem}{Lemma}
\newtheorem{prop}[theorem]{Proposition}
\newtheorem{cor}[lem]{Corollary}
\DeclareMathOperator*{\E}{\scalerel*{\mathbb{E}}{\sum}}
\title[Landau's Prime Ideal Theorem]{A New Elementary Proof of Landau's Prime Ideal Theorem, and Associated Results}
\author{Alex Burgin}
\address{School of Mathematics, Georgia Institute of Technology, Atlanta, GA}
\email{alexander.burgin@gatech.edu}
\date{\today}
\begin{document}

\begin{abstract}
 We give a new elementary proof of Landau's prime ideal theorem. 
 The proof is an extension of Richter's proof of the prime number theorem.  The main result contains other results related to the equidistribution of the prime ideal counting function.  

\end{abstract}

\maketitle

\tableofcontents

\section{Introduction}

We give a new elementary proof of Landau's prime ideal theorem \cite{landau}, which is in the spirit of Richter's recent elementary proof of the prime number theorem \cite{Richter_2021}.  
That argument is phrased in the language of `orthogonality' of arithmetic functions.  

Let $K$ be a number field. Unique factorization of elements in $K$ can  fail, but holds in the ring of Integral Ideals $I_K$. We can then speak of prime ideals $\mathfrak p\in I_K$.  For a general ideal $\mathfrak m$, let $\Omega(\mathfrak m) $ denote the number of prime ideals dividing $\mathfrak m$, counting multiplicity. By unique factorization of $I_K$, such a function is well-defined. Elements $\mathfrak m \in I_K$ also have a norm $N(\mathfrak m)$. Our main theorem is as follows: 

\begin{theorem}\label{mainTheorem}
    Let $K$ be a number field. Then there exists a positive integer $N$ such that, for any $g:\mathbb{N}_0\rightarrow \mathbb{C}$ satisfying $|g (n)|\leq 1$ for all $n$, one has
    \begin{align}
        \sum_{\mathfrak m \colon N(\mathfrak{m})\leq x}g(\Omega(\mathfrak{m})+k_1)=\sum_{\mathfrak m \colon N(\mathfrak{m})\leq x}g(\Omega(\mathfrak{m})+k_2)+o_K\Big(\sum_{\mathfrak m\colon N(\mathfrak{m})\leq x}1\Big)
    \end{align} for each $k_1,k_2\geq N$. 
\end{theorem}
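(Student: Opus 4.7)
The plan is to adapt Richter's orthogonality method from $\mathbb{N}$ to the monoid $I_K$ of integral ideals of $K$. Write
\begin{equation*}
T_k(x) := \sum_{N(\mathfrak{m})\le x} g(\Omega(\mathfrak{m})+k)
\quad\text{and}\quad
M(x) := \#\{\mathfrak{m}\in I_K : N(\mathfrak{m})\le x\}.
\end{equation*}
It suffices to prove $T_k(x)=T_{k+1}(x)+o_K(M(x))$ for all $k\ge N$, since the stated theorem then follows by telescoping across $k_1,\dots,k_2$. First I would assemble the elementary ideal-theoretic prerequisites: Landau's elementary counting $M(x) \asymp_K x$ (with an asymptotic $M(x)\sim c_K x$, provable by lattice-point counting in ideal classes), the consequent Mertens-type relation $\sum_{N(\mathfrak{p})\le x}(\log N(\mathfrak{p}))/N(\mathfrak{p}) = \log x + O_K(1)$ by partial summation, and a Tur\'an--Kubilius bound showing $\Omega(\mathfrak{m}) = \log\log N(\mathfrak{m}) + O_K(\sqrt{\log\log N(\mathfrak{m})})$ for a density-one set of $\mathfrak{m}$.

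Next, I would derive the key recursive identity. From the ideal-theoretic convolution $\log N(\mathfrak{m}) = \sum_{\mathfrak{d}\mid \mathfrak{m}} \Lambda_K(\mathfrak{d})$, where $\Lambda_K$ is supported on prime ideal powers with $\Lambda_K(\mathfrak{p}^j)=\log N(\mathfrak{p})$, together with the complete additivity of $\Omega$, a standard swap yields
\begin{equation*}
\sum_{N(\mathfrak{m})\le x} g(\Omega(\mathfrak{m})+k)\log N(\mathfrak{m})
= \sum_{j\ge 1}\,\sum_{N(\mathfrak{p})^j\le x}(\log N(\mathfrak{p}))\,T_{k+j}\!\left(\frac{x}{N(\mathfrak{p})^j}\right).
\end{equation*}
The $j\ge 2$ terms contribute $O_K(x)$ since $\sum_{\mathfrak{p},\,j\ge 2}(\log N(\mathfrak{p}))/N(\mathfrak{p})^j<\infty$, and on the left side Abel summation combined with $T_k(t)=O_K(t)$ lets me replace $\log N(\mathfrak{m})$ by $\log x$ at a cost of $O_K(x)$. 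Combined with the Mertens sum this gives
\begin{equation*}
T_k(x) = \frac{1}{\log x}\sum_{N(\mathfrak{p})\le x}\frac{\log N(\mathfrak{p})}{N(\mathfrak{p})}\cdot T_{k+1}\!\left(\frac{x}{N(\mathfrak{p})}\right) + o_K(x),
\end{equation*}
exhibiting $T_k(x)$ as a logarithmic average of $T_{k+1}$ at rescaled arguments, with weights summing to $1+o(1)$.

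The final step is Richter's iteration. Because the weights $(\log N(\mathfrak{p}))/(N(\mathfrak{p})\log x)$ sum to $1+o(1)$ by Mertens and the averaged arguments $x/N(\mathfrak{p})$ sweep to infinity, a subsequence-compactness argument on the normalized quantities $T_k(x)/x$ forces the $\limsup$ and $\liminf$ as $x\to\infty$ of $T_k/x$ and $T_{k+1}/x$ to coincide once $k$ is large enough that the typical-value bound from Tur\'an--Kubilius restricts $\Omega(\mathfrak{m})+k$ to a range where shifting by $1$ is absorbed in averaging; this fixes the constant $N$. I expect the main obstacle to be making this last iteration unconditional in the ideal setting, because several distinct prime ideals can share a norm, residue degrees vary with splitting behavior, and higher prime-power contributions to $\Omega$ must be tracked explicitly in the decoupling of $T_{k+j}(x/N(\mathfrak{p})^j)$ from $T_{k+1}(x/N(\mathfrak{p}))$. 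None of these is a conceptual obstacle to Richter's combinatorial scheme, but the bookkeeping is where the ideal-theoretic argument genuinely diverges from the $\mathbb{Z}$-case and must be executed with care.
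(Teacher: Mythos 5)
Your reduction to showing $T_k(x)=T_{k+1}(x)+o_K(x)$, the convolution identity built from $\log N(\mathfrak{m})=\sum_{\mathfrak{d}\mid\mathfrak{m}}\Lambda_K(\mathfrak{d})$, the disposal of the $j\ge 2$ terms, and the Mertens estimate $\sum_{N(\mathfrak{p})\le x}\log N(\mathfrak{p})/N(\mathfrak{p})=\log x+O_K(1)$ are all elementary and available in the ideal setting (modulo a normalization slip: the identity should read $\tfrac{T_k(x)}{x}=\tfrac{1}{\log x}\sum_{N(\mathfrak{p})\le x}\tfrac{\log N(\mathfrak{p})}{N(\mathfrak{p})}\cdot\tfrac{T_{k+1}(x/N(\mathfrak{p}))}{x/N(\mathfrak{p})}+o_K(1)$, i.e.\ the averaged quantity is the normalized $T_{k+1}(y)/y$, not $T_{k+1}(y)$ weighted by an extra $1/N(\mathfrak{p})$). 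The genuine gap is the last step. The first-order averaging identity only exhibits $T_k(x)/x$ as an approximate convex combination of values $T_{k+1}(y)/y$ with $y\le x$; this yields containments such as $\liminf T_{k+1}/x\le\liminf T_k/x\le\limsup T_k/x\le\limsup T_{k+1}/x$, and nothing more. It is entirely consistent with persistent oscillation, so no ``subsequence-compactness'' argument can extract $T_k(x)/x-T_{k+1}(x)/x\to 0$ from it. Indeed, for $g(n)=(-1)^n$ one has $T_{k+1}=-T_k$, and the desired conclusion is exactly $\sum_{N(\mathfrak{m})\le x}(-1)^{\Omega(\mathfrak{m})}=o_K(x)$, which (by Lemma \ref{LiouvilleImpliesPi} and Appendix C) is equivalent to the prime ideal theorem itself; deducing it from the single identity above is precisely the obstruction that historically forced Selberg's second-order symmetry formula, or, in Richter's framework, an entirely different mechanism. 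The appeal to Tur\'an--Kubilius does not repair this: concentration of $\Omega(\mathfrak{m})$ near $\log\log N(\mathfrak{m})$ gives no control on how an arbitrary $1$-bounded $g$ responds to a shift by $1$, and it is not the source of the threshold $N$.

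For comparison, the paper never iterates a Chebyshev-type identity. Its engine is a finitary second-moment (orthogonality) estimate, Proposition \ref{orthogonalityProp}, bounding $\E_{N(\mathfrak{m})\leq x}\bigl|\E^{\log}_{\mathfrak{n}\in S}(N(\mathfrak{n})\mathbf{1}_{\mathfrak{n}\mid\mathfrak{m}}-1)\bigr|^2$ by the correlation average of $\Phi(\mathfrak{n},\mathfrak{n}')=N(\gcd(\mathfrak{n},\mathfrak{n}'))-1$, combined with Proposition \ref{prop2.2analogue}: for every $\eta>0$ and every $k\ge N(\eta)$ one constructs equinumerous sets $S_1$ (primes) and $S_2$ (products of exactly $k$ primes), matched elementwise in norm up to a factor $1\pm\eta$, with $\E^{\log}\E^{\log}\Phi\le\eta$ on each. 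Theorem \ref{mainTheorem} then follows by Cauchy--Schwarz and swapping the $S_1$- and $S_2$-averages; the constant $N$ arises from the combinatorial Lemma \ref{combLemma2} ($N=\lceil 2/\epsilon^4\rceil$), fed by Chebyshev-type bounds for $\pi_K$ in annuli (Lemmas \ref{UpperBoundLemma}, \ref{lowerBoundLemma}, Proposition \ref{propPrimesInAnnuli}). If you want to pursue your route you would need a substitute for this input --- some second-order or decoupling information beyond the first-order identity --- and at that point you are no longer simplifying the argument but re-deriving the Erd\H{o}s--Selberg machinery in the ideal setting.
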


This contains several results. 
For Landau's theorem, take $g (x) = (-1)^x$, $k_1$ even, and $k_2$ odd. We see that asymptotically, the number of ideals $\mathfrak m$ 
for which $\Omega (\mathfrak m)$ is even is equal to those for for which $\Omega(\mathfrak m)$ is odd.  That is an equivalent form of Landau's theorem, 
see Lemma \ref{LiouvilleImpliesPi} and Appendix C. 

Let $ e(y) = e^{2\pi i y}$ for real $y$. 
For integer $q >2$, set  $g (y)=e(ay/q)$, for each $1\leq a  < q$. 
It then follows that by choosing $k_1\geq N$ and $k_2=k_1+1$ that
\begin{align*}
    (1-e(a/q))e(ak_1/q)\sum_{N(\mathfrak{m})\leq x}e(a\Omega(\mathfrak{m})/q)
    &=o_K\Big(\sum_{N(\mathfrak{m})\leq x}1\Big), 
    \end{align*} and hence $\sum_{N(\mathfrak{m})\leq x}e(a\Omega(\mathfrak{m})/q)=o_{K}\Big(\sum_{N(\mathfrak{m})\leq x}1\Big)$ for each $1\leq a<q$. This extends a theorem of Pillai \cite{pillai} and Selberg \cite{selbergErgodic},  establishing the equidistribution of $(\Omega(\mathfrak{m}))_{N(\mathfrak{m})\rightarrow \infty}$ modulo $q$ for each $q\geq 1$. See Appendix B for a concise explanation.

Take  $g(x)=e(\alpha x)$ for $\alpha \in \mathbb{R}\setminus \mathbb{Q}$.  With  choices of $k_1$ and $k_2$ depending upon $\alpha$, we can verify that 
\begin{align*}
     \sum_{N(\mathfrak{m})\leq x}e(\alpha \Omega(\mathfrak{m}))
     & = o_K\Big(\sum_{N(\mathfrak{m})\leq x}1\Big). 
\end{align*}
 This is a  theorem of Erd\H{o}s \cite{erdosAdditive} and Delange \cite{delange}, generalized to the number field setting. 
 Namely, that the fractional parts  $(\{\Omega(\mathfrak{m})
 \alpha\} \colon N(\mathfrak{m})\leq x )$ are asymptotically uniformly distributed in $[0,1)$ for any irrational $\alpha$.

\subsection{Notation, Conventions, and Standard Definitions}

We use $\mathcal{O}_K$ to denote the ring of integers of the number field $K$. Within $\mathcal{O}_K$, we let $I_K$ denote the set of integral ideals, and denote ideals in $I_K$ with $\mathfrak{m},\mathfrak{n}$, and letters in similar font. We use $\mathfrak{p}$ or $\mathfrak{q}$ to denote a prime ideal, and define $N_{K/\mathbb{Q}}:I_K\rightarrow \mathbb{Z}$ to be the ideal norm taking an ideal $\mathfrak{m}$ to its norm $N_{K/\mathbb{Q}}(\mathfrak{m})$. Usually, we will simply drop the subscript and just write $N(\mathfrak{m})$. $I_K$ is naturally endowed with a ring structure via ideal addition and multiplication, and $N:I_K\rightarrow \mathbb{Z}$ is a homomorphism from $(I_K,\cdot)\rightarrow (\mathbb{Z},\cdot)$.

Given some $m\in \mathbb{N}$, we say an integer sequence $(a_n)_{n\geq 0}$ is equidistributed modulo $m$ if \begin{align*}
    \lim_{x\rightarrow \infty}\frac{1}{x}\sum_{\substack{0\leq n<x\\ a_n\equiv c\ (\text{mod }m)}}1=\frac{1}{m}\quad \quad \text{for all }c\in \mathbb{Z}/m\mathbb{Z}.
\end{align*} Equidistribution of an integer sequence is closely related to the behavior of associated exponential sums; for more explanation, see Appendix B.\\

When we write $\sum_{N(\mathfrak{m})\leq x}$, we mean that we are summing over all ideals $\mathfrak{m}$ with norm no greater than $x$.\\

For a finite set of ideals $S\subset I_K$ and a function $f: I_K\rightarrow \mathbb{C}$, we use $\E_{\mathfrak{s}\in S}f(\mathfrak{s})$ to denote the standard averaging operator \begin{align*}
    \E_{\mathfrak{s}\in S}f(\mathfrak{s}):=\frac{1}{|S|}\sum_{\mathfrak{s}\in S}f(\mathfrak{s})
\end{align*} and $\E_{\mathfrak{s}\in S}^{\log}f(\mathfrak{s})$ to denote the logarithmic (weighted) averaging operator \begin{align*}
    \E_{\mathfrak{s}\in S}^{\log }f(\mathfrak{s}):=\frac{1}{\sum_{\mathfrak{s}\in S}\frac{1}{N(\mathfrak{s})}}\sum_{\mathfrak{s}\in S}\frac{f(\mathfrak{s})}{N(\mathfrak{s})}.
\end{align*} These are analogues of the operators in \cite{Richter_2021}.\\

We use the standard big-Oh notation $f(x)=O(g(x))$ if $|f(x)|\leq C|g(x)|$ for a constant $C$; similarly, $f(x)=O_{\alpha,\beta,...}(g(x))$ reads that $|f(x)|\leq C|g(x)|$ for some $C=C(\alpha,\beta,...)$. We use $f(x)=o(g(x))$ if the quotient $|f(x)/g(x)|\rightarrow 0$ as $x\rightarrow \infty$.

\subsection{Background}

The distribution of the prime-omega function over number fields $K$ is intimately tied to the distribution of prime numbers, echoing the classical case $K=\mathbb{Q}$. Recall the following lemma:

\begin{lem}\label{LiouvilleImpliesPi}
    Suppose that $L(x):=\sum_{N(\mathfrak{m})\leq x}(-1)^{\Omega(\mathfrak{m})}=o_K(x)$, i.e., that $\Omega$ is equidistributed modulo 2. Then $\pi_K(x):=\sum_{N(\mathfrak{p})\leq x}1\sim \frac{x}{\log x}$.
\end{lem}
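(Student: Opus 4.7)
My plan is to follow the standard elementary route from the Liouville function to the Prime Ideal Theorem, adapted to the number field $K$ via Weber's ideal counting asymptotic.

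First, I convert the hypothesis on $\lambda_K(\mathfrak{m}) := (-1)^{\Omega(\mathfrak{m})}$ into a statement about the Möbius function $\mu_K$ on $I_K$. The Euler product identity $\prod_{\mathfrak{p}}(1 + N(\mathfrak{p})^{-s})^{-1} = \zeta_K(2s)/\zeta_K(s)$ gives the convolution identity $\mu_K = \lambda_K * g_K$, where $g_K(\mathfrak{n}^2) = \mu_K(\mathfrak{n})$ and $g_K$ vanishes on non-squares. Summing over $N(\mathfrak{m}) \leq x$ rearranges as
\begin{align*}
M_K(x) \;:=\; \sum_{N(\mathfrak{m})\leq x}\mu_K(\mathfrak{m}) \;=\; \sum_{N(\mathfrak{n})\leq \sqrt{x}} \mu_K(\mathfrak{n})\, L(x/N(\mathfrak{n})^2).
\end{align*}
Using the hypothesis $L(y) = o_K(y)$ together with convergence of $\sum_{\mathfrak{n}} N(\mathfrak{n})^{-2}$ (which follows elementarily from $\#\{\mathfrak{n} \colon N(\mathfrak{n}) \leq y\} = O_K(y)$), a standard $\varepsilon$-splitting of the outer sum transfers the hypothesis to $M_K(x) = o_K(x)$.

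Second, I deduce $\psi_K(x) := \sum_{N(\mathfrak{m})\leq x}\Lambda_K(\mathfrak{m}) = x + o_K(x)$ from $M_K(x) = o_K(x)$. Möbius inverting $\log N(\mathfrak{m}) = \sum_{\mathfrak{d}\mid\mathfrak{m}}\Lambda_K(\mathfrak{d})$ and swapping summation order writes $\psi_K(x)$ as a $\mu_K$-weighted sum of $T(x/N(\mathfrak{d}))$, where $T(y) := \sum_{N(\mathfrak{e})\leq y}\log N(\mathfrak{e}) = \rho_K(y \log y - y) + O_K(y^{1-1/[K:\mathbb{Q}]}\log y)$ via Abel summation from Weber's asymptotic $A(y) = \rho_K y + O_K(y^{1-1/[K:\mathbb{Q}]})$. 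Passing $M_K(x) = o_K(x)$ through Abel summation to $\sum_{N(\mathfrak{d})\leq x}\mu_K(\mathfrak{d})/N(\mathfrak{d}) = o_K(1)$, the main terms combine to yield $\psi_K(x) = x + o_K(x)$, with the Weber error controlled via hyperbola-method splitting so as to exploit cancellation in $\mu_K$ rather than bound it trivially.

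Third, the passage from $\psi_K(x) \sim x$ to $\pi_K(x) \sim x/\log x$ is routine. The contribution of proper prime powers $\mathfrak{p}^k$ with $k \geq 2$ is $O_K(\sqrt{x}\log^2 x)$, so $\theta_K(x) := \sum_{N(\mathfrak{p})\leq x}\log N(\mathfrak{p}) = x + o_K(x)$, and Abel summation yields $\pi_K(x) \sim x/\log x$.

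The hard part is the second step: the number-field version of ``$M(x) = o(x)$ implies PNT''. The delicate point is that the error term in Weber's ideal counting asymptotic, after convolution with $\mu_K$, must not overwhelm the $o_K(x)$ target, forcing one to rely on cancellation in $\mu_K$ (via Abel summation from $M_K = o(x)$) rather than on crude bounds.
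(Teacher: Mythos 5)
Your first step is essentially the paper's own argument: the identity $\mu_K=\lambda_K*g_K$ with $g_K$ supported on squares and $g_K(\mathfrak{n}^2)=\mu_K(\mathfrak{n})$, the resulting formula $M_K(x)=\sum_{N(\mathfrak{n})^2\le x}\mu_K(\mathfrak{n})L(x/N(\mathfrak{n})^2)$, and the $\varepsilon$-splitting using $\sum_{\mathfrak{n}}N(\mathfrak{n})^{-2}<\infty$ is exactly what Appendix C does. Your third step is indeed routine. Note, however, that the paper deliberately does \emph{not} prove your second step: it treats the implication $M_K(x)=o_K(x)\Rightarrow\pi_K(x)\sim x/\log x$ as a known equivalence resting on the ideal-density bound, citing Diamond--Zhang, so by attempting it you are going beyond what the paper proves --- which is fine only if your sketch actually works.

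It does not, as written. The pivot of your second step is the claim that Abel summation converts $M_K(x)=o_K(x)$ into $\sum_{N(\mathfrak{d})\le x}\mu_K(\mathfrak{d})/N(\mathfrak{d})=o_K(1)$. Partial summation gives
\begin{equation*}
\sum_{N(\mathfrak{d})\le x}\frac{\mu_K(\mathfrak{d})}{N(\mathfrak{d})}=\frac{M_K(x)}{x}+\int_1^x\frac{M_K(t)}{t^2}\,dt,
\end{equation*}
and $M_K(t)=o(t)$ only bounds the integral by $o(\log x)$; it gives neither convergence nor smallness. Indeed, $\sum_{N(\mathfrak{d})\le x}\mu_K(\mathfrak{d})/N(\mathfrak{d})\to 0$ is itself a statement of prime-ideal-theorem strength, and deducing it from $M_K(x)=o(x)$ is essentially as hard as the implication you are trying to establish; the easy direction of partial summation goes the other way. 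The classical elementary route instead proves an Axer-type lemma (if $\sum_{N(\mathfrak{d})\le x}a_{\mathfrak{d}}=o(x)$, $\sum_{N(\mathfrak{d})\le x}|a_{\mathfrak{d}}|=O(x)$, and $F$ has power-saving growth and bounded variation, then $\sum_{N(\mathfrak{d})\le x}a_{\mathfrak{d}}F(x/N(\mathfrak{d}))=o(x)$) and applies it with $\mu_K$ against a combination such as $\log N-c\,d_K+c'$ whose summatory function has a power saving, à la Dirichlet's divisor argument. Your parallel remark that the Weber error is ``controlled via hyperbola-method splitting'' conceals the same difficulty: the trivial bound $\sum_{N(\mathfrak{d})\le x}(x/N(\mathfrak{d}))^{1-1/d}\log(x/N(\mathfrak{d}))\ll x\log x$ exceeds the target, and the cancellation you invoke is precisely what an Axer-type theorem supplies and what you never state or prove. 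So either prove such a lemma in the ideal setting, or do as the paper does and cite the known equivalence; as it stands, step two is a gap, not a proof.
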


An elementary proof of this lemma (which is actually an equivalence, but the proof of that will not be needed) follows from proving that the associated function $M(x):=\sum_{N(\mathfrak{m})<x}\mu(\mathfrak{m})=o_K(x)$, where $$\mu(\mathfrak{m})=\begin{cases}
    0 & \text{if there exists a nonunit ideal }\mathfrak{p}\text{ such that }\mathfrak{p}^2|\mathfrak{m}\\
    (-1)^{\Omega(\mathfrak{m})} & \text{otherwise.}
\end{cases}$$ This function is analogous to the so-called Mertens function over the case $K=\mathbb{Q}$, and showing sublinear growth for $M(x)$ implies that $\pi_K(x)\sim \frac{x}{\log x}$. This is a well known implication for the case of number fields, following from tight ideal density bounds (see \S 1.3 and \cite{diamondZhangEquivalences}). We will leave this implication unproven, but provide an elementary proof in Appendix C that $L(x)=o_K(x)\implies M(x)=o_K(x)$. Then, with Theorem 1, this recovers Landau's prime ideal theorem.\\

\subsection{Density Constants}

A well-known and elementary\footnote{To see an explanation as to why this is true, see \cite[Ch. 6: The Distribution of Ideals in a Number Ring]{marcus}.} fact (in that it requires no complex analysis) is that for any number field $K$ of degree $d$, one has $\sum_{N(\mathfrak{m})\leq x}1=c_{K}x+O_K(x^{1-\frac{1}{d}})$ for a constant $c_K> 0$. This constant $c_K$ is often referred to as the `ideal density' of $\mathcal{O}_K$ (see, e.g., \cite{montgomeryVaughan}, \S8.4).  The power saving-nature of the error term is important, but not essential–we note that these methods can be extended to derive PNTs and equidistribution results for many multiplicative semigroups (or `Beurling number systems') with suitably-quickly decaying error term in density.

\section{Building Towards Theorem 1}

The following result is fundamental to our paper: \begin{prop}\label{orthogonalityProp}
    Let $K$ be a number field of degree $d$, and let $c_K$ be the ideal density of $\mathcal{O}_K$. Then for any $S\subset I_K$ finite and nonempty, \begin{align}\label{prop2Statement}
        \frac{1}{x}\sum_{N(\mathfrak{m})\leq x}\Big|\sum_{\mathfrak{n}\in S}\mathbf{1}_{\mathfrak{n}|\mathfrak{m}}-\sum_{\mathfrak{n}\in S}\frac{1}{N(\mathfrak{n})}\Big|^2=c_K\sum_{\mathfrak{n}_1,\mathfrak{n}_2\in S}\frac{\Phi(\mathfrak{n}_1,\mathfrak{n}_2)}{N(\mathfrak{n}_1)N(\mathfrak{n}_2)}+O_K(|S|^{1+\frac{1}{d}}\cdot x^{-\frac{1}{d}})
    \end{align} where $\Phi(\mathfrak{n}_1,\mathfrak{n}_2):=N(\operatorname{gcd}(\mathfrak{n}_1,\mathfrak{n}_2))-1$.
\end{prop}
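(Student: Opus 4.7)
The plan is a variance-style computation: expand the square, swap the order of summation, and apply the ideal density asymptotic to each resulting term. Let $f(\mathfrak{m}) := \sum_{\mathfrak{n} \in S}\mathbf{1}_{\mathfrak{n}|\mathfrak{m}}$ and $A := \sum_{\mathfrak{n} \in S} 1/N(\mathfrak{n})$, so that the left-hand side of \eqref{prop2Statement} is $\tfrac{1}{x}\sum_{N(\mathfrak{m}) \le x}(f(\mathfrak{m}) - A)^2$. Expanding the square and rearranging, it suffices to evaluate, for each ordered pair $(\mathfrak{n}_1, \mathfrak{n}_2) \in S \times S$,
\[
    \sum_{N(\mathfrak{m}) \le x}\Big(\mathbf{1}_{\mathfrak{n}_1|\mathfrak{m}} - \tfrac{1}{N(\mathfrak{n}_1)}\Big)\Big(\mathbf{1}_{\mathfrak{n}_2|\mathfrak{m}} - \tfrac{1}{N(\mathfrak{n}_2)}\Big),
\]
then sum over pairs and divide by $x$.

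The key input is the divisor-count identity: for any ideal $\mathfrak{a}$, the bijection $\mathfrak{m} \leftrightarrow \mathfrak{a}\mathfrak{m}'$ between multiples of $\mathfrak{a}$ and integral ideals, combined with the ideal density $\sum_{N(\mathfrak{m}') \le y}1 = c_K y + O_K(y^{1-1/d})$, gives $\sum_{\mathfrak{a}|\mathfrak{m},\,N(\mathfrak{m}) \le x} 1 = c_K x/N(\mathfrak{a}) + O_K((x/N(\mathfrak{a}))^{1-1/d})$. I apply this to the four pieces of the expanded product: with $\mathfrak{a} = \operatorname{lcm}(\mathfrak{n}_1,\mathfrak{n}_2)$ for the cross term (using $\mathbf{1}_{\mathfrak{n}_1|\mathfrak{m}}\mathbf{1}_{\mathfrak{n}_2|\mathfrak{m}} = \mathbf{1}_{\operatorname{lcm}|\mathfrak{m}}$), and with $\mathfrak{a} = \mathfrak{n}_1, \mathfrak{n}_2, \mathcal{O}_K$ for the remaining three. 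The four $c_K x$-contributions collapse, via $N(\operatorname{lcm})N(\gcd) = N(\mathfrak{n}_1)N(\mathfrak{n}_2)$, to
\[
    c_K x \cdot \frac{N(\gcd(\mathfrak{n}_1,\mathfrak{n}_2)) - 1}{N(\mathfrak{n}_1)N(\mathfrak{n}_2)} = c_K x \cdot \frac{\Phi(\mathfrak{n}_1,\mathfrak{n}_2)}{N(\mathfrak{n}_1)N(\mathfrak{n}_2)},
\]
and summing over pairs (and dividing by $x$) produces the stated main term.

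The delicate part is error aggregation. Each pair contributes an error dominated by $O_K((x/N(\operatorname{lcm}(\mathfrak{n}_1,\mathfrak{n}_2)))^{1-1/d})$, so the naive pair-by-pair bound only yields $O_K(|S|^2 x^{-1/d})$, whereas the proposition demands the sharper $O_K(|S|^{1+1/d} x^{-1/d})$. The hard part will be squeezing out this improvement. To do so, I would use $N(\operatorname{lcm}(\mathfrak{n}_1,\mathfrak{n}_2)) \ge \max(N(\mathfrak{n}_1), N(\mathfrak{n}_2))$, symmetrize, and reduce matters to controlling $\sum_{\mathfrak{n}_2 \in S} N(\mathfrak{n}_2)^{-(1-1/d)} \cdot \#\{\mathfrak{n}_1 \in S : N(\mathfrak{n}_1) \le N(\mathfrak{n}_2)\}$. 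The inner count is bounded by $\min(|S|, O_K(N(\mathfrak{n}_2)))$ via the ideal density; exploiting this together with the observation that any $|S|$-element subset of $I_K$ has norms reaching scale $\gtrsim_K |S|$, a dyadic summation in $N(\mathfrak{n}_2)$ yields the exponent $1 + 1/d$. The three subdominant error sources, which carry extra $1/N(\mathfrak{n}_i)$ factors, are no larger and are handled by the same method. Beyond this error bookkeeping, the rest of the argument is a routine cancellation.
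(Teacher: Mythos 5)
Your proposal is correct and follows essentially the same route as the paper: expand the square, apply the ideal density count to each divisor sum (the cross term via $\operatorname{lcm}$, exactly as the paper does via $\mathfrak{n}_1\mathfrak{d}_2$ with $N(\operatorname{lcm})N(\gcd)=N(\mathfrak{n}_1)N(\mathfrak{n}_2)$), and recover the sharper error $O_K(|S|^{1+1/d}x^{-1/d})$ from the bound $\sum_{\mathfrak{n}\in S}N(\mathfrak{n})^{-(1-1/d)}\ll_K|S|^{1/d}$, which is precisely the paper's estimate \eqref{error1} and which your ``norms reach scale $\gtrsim_K|S|$'' plus dyadic summation argument re-derives. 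The only differences are organizational (pairwise covariance expansion versus the paper's $S_1-2AS_2+S_3$ bookkeeping), not substantive.
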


\begin{proof}
    Let $A$ denote the sum $\sum_{\mathfrak{n}\in S}\frac{1}{N(\mathfrak{\mathfrak{n}})}$; then by expanding the square on the left hand side  of \eqref{prop2Statement} one has \begin{align*}
        \Big|\sum_{\mathfrak{n}\in S}\mathbf{1}_{\mathfrak{n}|\mathfrak{m}}-A\Big|^2 &=\Big(\sum_{\mathfrak{n}\in S}\mathbf{1}_{\mathfrak{n}|\mathfrak{m}}\Big)^2-2A\sum_{\mathfrak{n}\in S}\mathbf{1}_{\mathfrak{n}|\mathfrak{m}}+A^2 ,
    \end{align*} and summing this over all $\mathfrak{m}$ with $N(\mathfrak{m})\leq x$ and dividing by $x$, one has \begin{align*}
        \frac{1}{x}\sum_{N(\mathfrak{m})\leq x}\Big|\sum_{\mathfrak{n}\in S}
        &\mathbf{1}_{\mathfrak{n}|\mathfrak{m}}-A\Big|^2  \coloneqq S_1-2AS_2+S_3, 
    \\ 
    \noalign{\noindent where} 
        S_1&:=\frac{1}{x}\sum_{N(\mathfrak{m})\leq x}\Big(\sum_{\mathfrak{n}\in S}\mathbf{1}_{\mathfrak{n}|\mathfrak{m}}\Big)^2 ,\\ S_2&:=\frac{1}{x}\sum_{N(\mathfrak{m})\leq x}\sum_{\mathfrak{n}\in S}\mathbf{1}_{\mathfrak{n}|\mathfrak{m}},
        \\ S_3&:= \frac{A^2}{x}\sum_{N(\mathfrak{m})\leq x}1.
    \end{align*} 
    
    \emph{Calculation of $S_3$}. Using that $\sum_{N(\mathfrak{m})\leq x}1=c_Kx+O_K(x^{1-\frac{1}{d}})$ we see that $S_3=c_KA^2+O_K(A^2x^{-\frac{1}{d}})$. Then, using that $A=O_K(\log |S|)$ (see Appendix A for a method of proof), we have $S_3=c_KA^2+O_K(\log^2(|S|)x^{-\frac{1}{d}})$.\\

    \emph{Calculation of $S_2$}. For any fixed $\mathfrak{n}$, if $\mathfrak{n}|\mathfrak{m}$ then one can write $\mathfrak{m}=\mathfrak{n}\mathfrak{m}'$ in a unique way, and hence \begin{align*}\frac{1}{x}\sum_{\mathfrak m \colon  N(\mathfrak{m})\leq x}\mathbf{1}_{\mathfrak{n}|\mathfrak{m}}&=\frac{1}{x}\sum_{\mathfrak m \colon  N(\mathfrak{nm'})\leq x}1 \\ &= \frac{1}{x}\sum_{\mathfrak m \colon N(\mathfrak{m}')\leq x/N(\mathfrak{n})}1 \\ &=\frac{c_K}{N(\mathfrak{n})}+O_K\Big(x^{-\frac{1}{d}}N(\mathbf{n})^{\frac{1}{d}-1}\Big)\end{align*} so that one has, after summing over $\mathfrak{n}\in S$, \begin{align*}
        S_2=\sum_{\mathfrak{n}\in S}\frac{c_K}{N(\mathfrak{\mathfrak{n}})}+O_K\Big(x^{-\frac{1}{d}}\sum_{\mathfrak{n}\in S}\frac{1}{N(\mathfrak{n})^{1-\frac{1}{d}}}\Big)=c_KA+O_K\Big(x^{-\frac{1}{d}}\sum_{\mathfrak{n}\in S}\frac{1}{N(\mathfrak{n})^{1-\frac{1}{d}}}\Big).
    \end{align*} Then since \begin{align}\label{error1}
        \sum_{\mathfrak{n}\in S}\frac{1}{N(\mathfrak{n})^{1-\frac{1}{d}}}\ll_K |S|^{\frac{1}{d}}
    \end{align} (for a proof of \eqref{error1}, see Appendix A), we then have $S_2=c_KA+O_K(x^{-\frac{1}{d}}|S|^{\frac{1}{d}})$.\\
    
    \emph{Calculation of $S_1$}. We begin by noting that we can write 
    \begin{equation}
        S_1=\frac{1}{x}\sum_{\mathfrak m \colon N(\mathfrak{m})\leq x}\sum_{\mathfrak{n}_1,\mathfrak{n}_2\in S}\mathbf{1}_{\mathfrak{n}_1|\mathfrak{m}}\mathbf{1}_{\mathfrak{n}_2|\mathfrak{m}}
    \end{equation}
    For the moment, we view $\mathfrak n_1, \mathfrak n_2$ as fixed.     
    Then, we note that since for any $\mathfrak{m}$ with $\mathfrak{n}_1|\mathfrak{m}$ we can write $\mathfrak{m}=\mathfrak{m}'\mathfrak{n}_1$ in a unique way, so that \begin{align}\label{eq5}\frac{1}{x}\sum_{\mathfrak m \colon N(\mathfrak{m})\leq x}\mathbf{1}_{\mathfrak{n}_1|\mathfrak{m}}\mathbf{1}_{\mathfrak{n}_2|\mathfrak{m}}&=\frac{1}{x}\sum_{\mathfrak m' \colon N(\mathfrak{m'n}_1)\leq x}\mathbf{1}_{\mathfrak{n}_2|\mathfrak{m'n}_1} \\ &= \nonumber \frac{1}{x}\sum_{\mathfrak m' \colon N(\mathfrak{m}')\leq x/N(\mathfrak{n}_1)} \mathbf{1}_{\mathfrak{n}_2|\mathfrak{m}'\mathfrak{n}_1} \end{align} and then letting $\mathfrak{d}=\operatorname{gcd}(\mathfrak{n}_1,\mathfrak{n}_2)$, $\mathfrak{d}\mathfrak{d}_i=\mathfrak{n}_i$ ($i=1,2$) we have $\mathfrak{n}_2|\mathfrak{m}'\mathfrak{n}_1$ if and only if $\mathfrak{d}_2|\mathfrak{m}'\mathfrak{d}_1$, and hence $\mathbf{1}_{\mathfrak{n}_2|\mathfrak{m}'\mathfrak{n}_1}=\mathbf{1}_{\mathfrak{d}_2|\mathfrak{m'}\mathfrak{d}_1}$. Then since $\mathfrak{d}_2,\mathfrak{d}_1$ are coprime, if $\mathfrak{d}_2|\mathfrak{m}'\mathfrak{d}_1$ we must have $\mathfrak{d}_2|\mathfrak{m}'$ so that, from (\ref{eq5}), 
    \begin{align*}
        \frac{1}{x}\sum_{\mathfrak m \colon N(\mathfrak{m})\leq x}\mathbf{1}_{\mathfrak{n}_1|m}\mathbf{1}_{\mathfrak{n}_2|m}&= \frac{1}{x}\sum_{\mathfrak m' \colon N(\mathfrak{m}')\leq x/N(\mathfrak{n}_1)}\mathbf{1}_{\mathfrak{d}_2|\mathfrak{m}'} \\ &= \frac{1}{x}\sum_{\mathfrak m''  \colon N(\mathfrak{m}'')\leq x/N(\mathfrak{n}_1\mathfrak{d}_2)}1 \\ &= \frac{c_K}{N(\mathfrak{n}_1\mathfrak{d}_2)}+O_K(x^{-\frac{1}{d}}N(\mathfrak{n}_1\mathfrak{d}_2)^{\frac{1}{d}-1}) \\ &= \frac{c_K\cdot N(\operatorname{gcd}(\mathfrak{n}_1,\mathfrak{n}_2))}{N(\mathfrak{n}_1)N(\mathfrak{n}_2)}+O_K\Big(x^{-\frac{1}{d}}\frac{N(\operatorname{gcd}(\mathfrak{n}_1,\mathfrak{n}_2))^{1-\frac{1}{d}}}{N(\mathfrak{n}_1)^{1-\frac{1}{d}}N(\mathfrak{n}_2)^{1-\frac{1}{d}}}\Big).
    \end{align*}

Defining $\Phi(\mathfrak{n}_1,\mathfrak{n}_2):=N(\operatorname{gcd}(\mathfrak{n}_1,\mathfrak{n}_2))-1$ and then summing over $\mathfrak{n}_1,\mathfrak{n}_2\in S$ one then has \begin{align*}
        S_1&=\sum_{\mathfrak{n}_1,\mathfrak{n}_2\in S}\frac{c_K\Phi(\mathfrak{n}_1,\mathfrak{n}_2)}{N(\mathfrak{n}_1)N(\mathfrak{n}_2)}+\sum_{\mathfrak{n}_1,\mathfrak{n}_2\in S}\frac{c_K}{N(\mathfrak{n}_1)N(\mathfrak{n}_2)}+E_1 \\ &=\sum_{\mathfrak{n}_1,\mathfrak{n}_2\in S}\frac{c_K\Phi(\mathfrak{n}_1,\mathfrak{n}_2)}{N(\mathfrak{n}_1)N(\mathfrak{n}_2)}+c_KA^2+E_1,
    \end{align*} 
    where, using \eqref{error1}, 
    \begin{align*}
        E_1 & \ll_K x^{-\frac{1}{d}}\sum_{\mathfrak{n}_1,\mathfrak{n}_2\in S}\Big(\frac{N(\operatorname{gcd}(\mathfrak{n}_1,\mathfrak{n}_2))}{N(\mathfrak{n}_1)N(\mathfrak{n}_2)}\Big)^{1-\frac{1}{d} } 
        \\ 
 &   \ll_K x^{-\frac{1}{d}}\sum_{\mathfrak{n}_1,\mathfrak{n}_2\in S}\frac{1}{N(\mathfrak{n}_2)^{1-\frac{1}{d}}}\ll x^{-\frac{1}{d}}|S|^{1+\frac{1}{d}}. 
    \end{align*} 
    We then have 
    \begin{align*}
        S_1=\sum_{\mathfrak{n}_1,\mathfrak{n}_2\in S}\frac{c_K\Phi(\mathfrak{n}_1,\mathfrak{n}_2)}{N(\mathfrak{n}_1)N(\mathfrak{n}_2)}+c_KA^2+O_K(x^{-\frac{1}{d}}|S|^{1+\frac{1}{d}}).\\
    \end{align*}

    Now, let us denote $\sum_{\mathfrak{n}_1,\mathfrak{n}_2\in S}\frac{c_K\Phi(\mathfrak{n}_1,\mathfrak{n}_2)}{N(\mathfrak{n}_1)N(\mathfrak{n}_2)}$ with $S_\Phi$. One has \begin{align*}
        S_1-2AS_2+S_3 &= \Big(S_\Phi +c_KA^2+O_K(x^{-\frac{1}{d}}|S|^{1+\frac{1}{d}})\Big)\\ &\quad -2A\Big(c_KA+O_K(x^{-\frac{1}{d}}|S|^{\frac{1}{d}})\Big) \\ &\quad+ \Big(c_KA^2+O_K(\log^2(|S|)x^{-\frac{1}{d}})\Big)
        \\ 
        &=S_\Phi+O_K(x^{-\frac{1}{d}}|S|^{1+\frac{1}{d}})
    \end{align*} since the $A^2$ term cancels. 
    This is the  desired estimate. 
\end{proof}

We then want an equivalent formulation to Proposition 1 in terms of expectation:

\begin{cor}[Equivalent Assertion to Proposition \ref{orthogonalityProp}]
    Let $K$ be a number field of degree $d$, and let $c_K$ be the ideal density of $\mathcal{O}_K$. Suppose $S\subset K$ is finite and nonempty. Then \begin{align}
        \E_{N(\mathfrak{m})\leq x}\Big|\E_{\mathfrak{n}\in S}^{\log{}}(N(\mathfrak{n})\mathbf{1}_{\mathfrak{n}|\mathfrak{m}}-1)\Big|^2=\E_{\mathfrak{n}\in S}^{\log{}}\E_{\mathfrak{n}'\in S}^{\log{}}\Phi(\mathfrak{n},\mathfrak{n}')+O_K(x^{-\frac{1}{d}}|S|^{1+\frac{1}{d}}).
    \end{align} 
\end{cor}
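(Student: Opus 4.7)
The plan is to recognize the Corollary as a direct rewriting of Proposition \ref{orthogonalityProp} after normalizing by the relevant averaging factors. First, I set $A := \sum_{\mathfrak{n}\in S}\frac{1}{N(\mathfrak{n})}$ and $T(x) := \sum_{N(\mathfrak{m})\leq x}1$. Unpacking the definition of the logarithmic average immediately yields the key identity
\[
\E_{\mathfrak{n}\in S}^{\log}\bigl(N(\mathfrak{n})\mathbf{1}_{\mathfrak{n}|\mathfrak{m}}-1\bigr)=\frac{1}{A}\sum_{\mathfrak{n}\in S}\frac{N(\mathfrak{n})\mathbf{1}_{\mathfrak{n}|\mathfrak{m}}-1}{N(\mathfrak{n})}=\frac{1}{A}\Bigl(\sum_{\mathfrak{n}\in S}\mathbf{1}_{\mathfrak{n}|\mathfrak{m}}-A\Bigr),
\]
which is exactly $1/A$ times the quantity inside the absolute value on the LHS of \eqref{prop2Statement}.

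Squaring this identity and averaging arithmetically over $\mathfrak{m}$ with $N(\mathfrak{m})\leq x$, the LHS of the Corollary becomes
\[
\frac{x}{A^2\,T(x)}\cdot\frac{1}{x}\sum_{N(\mathfrak{m})\leq x}\Bigl|\sum_{\mathfrak{n}\in S}\mathbf{1}_{\mathfrak{n}|\mathfrak{m}}-A\Bigr|^2.
\]
I would then apply the density estimate $T(x) = c_Kx + O_K(x^{1-1/d})$, giving $x/T(x) = c_K^{-1}(1+O_K(x^{-1/d}))$, and substitute Proposition \ref{orthogonalityProp} for the inner arithmetic average. The $c_K$ factors cancel and the leading term becomes
\[
\frac{1}{A^2}\sum_{\mathfrak{n}_1,\mathfrak{n}_2\in S}\frac{\Phi(\mathfrak{n}_1,\mathfrak{n}_2)}{N(\mathfrak{n}_1)N(\mathfrak{n}_2)}=\E^{\log}_{\mathfrak{n}\in S}\E^{\log}_{\mathfrak{n}'\in S}\Phi(\mathfrak{n},\mathfrak{n}'),
\]
the equality being just the definition of the iterated logarithmic average.

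The main obstacle is simply collecting the error terms, which arise from two places: the $c_K^{-1}A^{-2}\cdot O_K(|S|^{1+1/d}x^{-1/d})$ contribution from the remainder in \eqref{prop2Statement}, and an $O_K(x^{-1/d})$ relative perturbation of the main term coming from expanding $x/T(x)$. To dispose of the latter, I would use the crude bound $\Phi(\mathfrak{n},\mathfrak{n}')\leq N(\gcd(\mathfrak{n},\mathfrak{n}'))$ together with the estimate $\sum_{\mathfrak{n}\in S}N(\mathfrak{n})^{-1+1/d}\ll_K|S|^{1/d}$ from Appendix A, exactly as in the calculation of $S_1$ inside the proof of Proposition \ref{orthogonalityProp}. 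After this bookkeeping, both errors collapse to the stated $O_K(x^{-1/d}|S|^{1+1/d})$ shape, with the $1/A^2$ normalization absorbed consistently with the Corollary's statement.
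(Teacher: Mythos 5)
Your proposal is correct and follows essentially the same route as the paper: unpack $\E^{\log}_{\mathfrak{n}\in S}(N(\mathfrak{n})\mathbf{1}_{\mathfrak{n}|\mathfrak{m}}-1)=\frac{1}{A}\bigl(\sum_{\mathfrak{n}\in S}\mathbf{1}_{\mathfrak{n}|\mathfrak{m}}-A\bigr)$, divide Proposition \ref{orthogonalityProp} by $\sum_{N(\mathfrak{m})\leq x}1=c_Kx+O_K(x^{1-\frac{1}{d}})$ and by $A^2$, and absorb the resulting perturbations into $O_K(x^{-\frac{1}{d}}|S|^{1+\frac{1}{d}})$. Your extra step bounding $\Phi$ via $N(\gcd)$ and the Appendix A estimate to control the relative $O_K(x^{-\frac{1}{d}})$ perturbation of the main term is exactly the bookkeeping the paper leaves implicit, so no further comment is needed.
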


\begin{proof} We can begin by writing Proposition 2 as $$\sum_{N(\mathfrak{m})\leq x}\Big|\sum_{\mathfrak{n}\in S}\mathbf{1}_{\mathfrak{n}|\mathfrak{m}}-A\Big|^2=c_Kx\sum_{\mathfrak{n}_1,\mathfrak{n}_2\in S}\frac{\Phi(\mathfrak{n}_1,\mathfrak{n}_2)}{N(\mathfrak{n}_1)N(\mathfrak{n}_2)}+O_K(x^{1-\frac{1}{d}}|S|^{1+\frac{1}{d}}).$$ Dividing both sides by $\sum_{N(\mathfrak{m})\leq x}1=c_Kx+O_K(x^{-\frac{1}{d}})$ gives then that \begin{align*}
    \E_{N(\mathfrak{m})\leq x}\Big|\sum_{\mathfrak{n}\in S}\mathbf{1}_{\mathfrak{n}|\mathfrak{m}}-A\Big|^2&=\frac{1}{1+O_K(x^{-\frac{1}{d}})}\sum_{\mathfrak{n}_1,\mathfrak{n}_2\in S}\frac{\Phi(\mathfrak{n}_1,\mathfrak{n}_2)}{N(\mathfrak{n}_1)N(\mathfrak{n}_2)}+O_K(x^{-\frac{1}{d}}|S|^{1+\frac{1}{d}}) \\ &= \sum_{\mathfrak{n}_1,\mathfrak{n}_2\in S}\frac{\Phi(\mathfrak{n}_1,\mathfrak{n}_2)}{N(\mathfrak{n}_1)N(\mathfrak{n}_2)}+O_K(x^{-\frac{1}{d}}|S|^{1+\frac{1}{d}}).
\end{align*} Dividing again by $A^2$ and noting that $\frac{1}{A}\sum_{\mathfrak{n}\in S}\mathbf{1}_{\mathfrak{n}|\mathfrak{m}}=\E_{\mathfrak{n}\in S}^{\log{}}N(\mathfrak{n})\mathbf{1}_{\mathfrak{n}|\mathfrak{m}}$, one has \begin{align*}
    \E_{N(\mathfrak{m})\leq x}\Big|\E_{\mathfrak{n}\in S}^{\log{}}N(\mathfrak{n})\mathbf{1}_{\mathfrak{n}|\mathfrak{m}}-1\Big|^2=\E_{\mathfrak{n}_1\in S}^{\log{}}\E_{\mathfrak{n}_2\in S}^{\log } \Phi(\mathfrak{n}_1,\mathfrak{n}_2)+O_K(x^{-\frac{1}{d}}|S|^{1+\frac{1}{d}}).
\end{align*}

\end{proof}

The following proposition is our main ingredient, analogous to Richter's Proposition 2.2:
\begin{prop}\label{prop2.2analogue}
    For all $\eta>0$ there exists some $N=N(\eta)\in \mathbb{N}$ such that for all $k\geq N$ there exist two nonempty sets of ideals $S_1=S_1(\eta,k),S_2=S_2(\eta,k)\subset I_K$ for which the following are true: \begin{enumerate}
        \item[(i)] All ideals in $S_1$ are prime and all ideals in $S_2$ are a product of exactly $k$ prime ideals
        \item[(ii)] The sets $S_1$ and $S_2$ have the same cardinality, and there exists an enumeration $S_1=\{\mathfrak{p}_{1},\mathfrak{p}_2,...,\mathfrak{p}_j\}$ and $S_2=\{\mathfrak{m}_1,\mathfrak{m}_2,...,\mathfrak{m}_j\}$ for which $(1-\eta)N(\mathfrak{p}_i)\leq N(\mathfrak{m}_i)\leq (1+\eta)N(\mathfrak{p}_i)$ for all $i=1,...j$.
        \item[(iii)] $\E^{\log}_{\mathfrak{n}\in S_i}\E^{\log}_{\mathfrak{n}'\in S_i}\Phi(\mathfrak{n},\mathfrak{n}')\leq \eta$ for $i=1,2$.
    \end{enumerate}
\end{prop}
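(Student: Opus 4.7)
The plan is to adapt the construction in the proof of Proposition 2.2 of \cite{Richter_2021} to the number field setting. The elementary inputs needed are (a) an analogue of Mertens' theorem, namely $\sum_{N(\mathfrak p)\le y} \frac{1}{N(\mathfrak p)} = \log\log y + O_K(1)$, and (b) a Selberg--Sathe-type count of $k$-almost prime ideals,
\[
\#\{\mathfrak m \colon N(\mathfrak m)\le y,\ \Omega(\mathfrak m)=k\} \sim \frac{c_K\, y\,(\log\log y)^{k-1}}{(k-1)!\,\log y}.
\]
Both follow elementarily from the ideal density asymptotic $\sum_{N(\mathfrak m)\le x}1 = c_K x + O_K(x^{1-1/d})$ via partial summation and induction on $k$.

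Given $\eta>0$, I would take $k \geq N(\eta)$ large and pick parameters $r_0 \ll R$ satisfying $\log\log r_0 \approx (k-1)/e$ (the ``peak'' value for the Selberg--Sathe ratio) and $\log(\log R/\log r_0)$ large in terms of $1/\eta$. Partition $[r_0, R]$ into multiplicative sub-intervals $I_r = [r, r(1+\eta)]$. In each $I_r$, Chebyshev and Selberg--Sathe show that the number of primes and the number of $k$-products in $I_r$ are of comparable order; form a bijection between these two sets (trimming the larger), and let $S_1, S_2$ be the unions of the first and second coordinates respectively. Conditions (i) and (ii) are then immediate from the construction, since each matched pair $(\mathfrak p_i, \mathfrak m_i)$ lies inside the same $I_r$.

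For (iii), the bound on $S_1$ is almost automatic: since distinct primes have $\gcd$ of norm $1$, only the diagonal contributes, so that
\[
\E^{\log}_{\mathfrak n\in S_1}\E^{\log}_{\mathfrak n'\in S_1}\Phi(\mathfrak n, \mathfrak n') = \frac{1}{A_1^2}\sum_{\mathfrak p\in S_1}\frac{N(\mathfrak p)-1}{N(\mathfrak p)^2} \;\leq\; \frac{1}{A_1},
\]
where $A_1 := \sum_{\mathfrak p\in S_1} 1/N(\mathfrak p)$; by Mertens, this can be made $\geq 1/\eta$ provided $\log R/\log r_0$ is chosen large. The $S_2$ bound is more delicate: expanding $N(\gcd(\mathfrak n, \mathfrak n')) = \prod_{\mathfrak p \mid \gcd}N(\mathfrak p)$, one decomposes the double expectation according to the number $j \geq 1$ of shared prime factors, and estimates each contribution via Mertens; the total is eventually of order $O_K(1/\log\log r_0)$, which is at most $\eta$ for $r_0$ sufficiently large.

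The main obstacle is arranging the parameters $r_0, R, \eta, k$ to satisfy all of these requirements simultaneously. At the peak of the Selberg--Sathe asymptotic, the ratio of $k$-product count to prime count in $I_r$ is only of order $1/\sqrt{k-1}$ (by Stirling), so roughly $\sqrt{k-1}$ primes must be discarded per matched pair. A wide enough range $[r_0, R]$ compensates for this loss and keeps $A_1 \gtrsim 1/\eta$, but the bookkeeping of the Selberg--Sathe error term across all sub-intervals $I_r$, and the verification that the effective (post-trimming) logarithmic sums still grow at the required rate, is the technical crux of the argument.
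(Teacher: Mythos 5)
There is a genuine gap, and it sits at the heart of your plan. Your input (b), the Sathe--Selberg/Landau asymptotic for $k$-almost prime ideals, does \emph{not} follow elementarily from the ideal density estimate ``via partial summation and induction on $k$'': the base case $k=1$ of that induction is precisely Landau's prime ideal theorem, which is the theorem this proposition is being used to prove, so invoking it is circular. Worse, even if you granted yourself that asymptotic for the full range $N(\mathfrak m)\le y$, your construction needs lower bounds for the number of primes (and of $k$-products) in \emph{every} short multiplicative interval $I_r=[r,(1+\eta)r]$ throughout $[r_0,R]$, and no Chebyshev-type argument supplies this: Chebyshev bounds are consistent with some short intervals containing no prime ideals at all, and a uniform lower bound for primes in all intervals $[r,(1+\eta)r]$ is itself PNT-strength information. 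This is exactly the obstruction the paper's proof (following Richter) is built to avoid: one only proves (Proposition \ref{propPrimesInAnnuli} and Lemma \ref{combLemma1}) that in each unit window of the exponent scale there exist \emph{some} well-separated good exponents $x,y$ whose short annuli $A(16^x,16^{x+\delta})$ contain many primes, and then the purely combinatorial Lemma \ref{combLemma2} is used to align the sum $\zeta_{1,\mathbf m}+\cdots+\zeta_{k,\mathbf m}$ of $k$ good exponents with another good exponent $\zeta_{\mathbf m}$, so that products of primes from the small annuli land within a factor $16^{2\epsilon}\le 1+\eta$ of primes in a single annulus. Your interval-by-interval matching presumes a regularity of distribution that is not available by elementary means.

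A secondary issue is condition (iii) for $S_2$. In the paper, $S_2$ is not ``all $k$-products in a long range'': it is a union of product sets $P_{\zeta_{1,\mathbf m}}\cdots P_{\zeta_{k,\mathbf m}}$ indexed by sparse sets $A_1,\ldots,A_k$ chosen so that each $A_i$ is contained in a fixed multiple $s_i\mathbb N$ with $\sum_{n\in A_i}1/n\ge M$ and the $s_i$ grow rapidly; this structure is what makes the off-diagonal (non-coprime) contribution to $\E^{\log}\E^{\log}\Phi$ small, via the factor $D^{-|F|}\big(\prod_{i\in F}\sum_{m\in A_i}1/m\big)^{-1}\le (DM)^{-|F|}$. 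With $S_2$ taken to be (a trimming of) all $k$-almost primes in $[r_0,R]$, pairs sharing a prime factor are far more numerous relative to the logarithmic mass, and your claimed $O_K(1/\log\log r_0)$ bound is asserted rather than proved; it would at minimum require the $(k-1)$-almost-prime counts you cannot legitimately use. Your treatment of $S_1$ (only the diagonal survives, so the expectation is $1/\sum_{\mathfrak p\in S_1}1/N(\mathfrak p)$, controlled by a Mertens-type bound) is fine and matches the paper's computation, but the $S_1$ part was never the difficulty.
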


\begin{proof}[Proof of Theorem \ref{mainTheorem} assuming Proposition \ref{prop2.2analogue}] Fix $\eta>0$ and take $g:\mathbb{N}_0\rightarrow \mathbb{C}$ with $|g|\leq 1$. Choose $S_1=S_1(\eta)$ and $S_2=S_2(\eta)$ to the desired specifications above. Note first that \begin{align*}
    \E_{\mathfrak{m}\in S_i}^{\log}\E_{N(\mathfrak{n})\leq x/N(\mathfrak{m})}g(\Omega(\mathfrak{m}\mathfrak{n}))&=\E_{\mathfrak{m}\in S_i}^{\log}\frac{1}{\sum_{N(\mathfrak{n})\leq x/N(\mathfrak{m})}1}\sum_{N(\mathfrak{n})\leq x/N(\mathfrak{m})}g(\Omega(\mathfrak{mn}))\\ &= \E_{\mathfrak{m}\in S_i}^{\log}\frac{1}{c_K\cdot \frac{x}{N(\mathfrak{m})}+O_K(x^{1-\frac{1}{d}})}\sum_{N(\mathfrak{n}')\leq x}1_{\mathfrak{m}|\mathfrak{n}'}g(\Omega(\mathfrak{n}')) \\ &= \E_{\mathfrak{m}\in S_i}^{\log}(1+O_K(x^{-\frac{1}{d}}))\E_{N(\mathfrak{n'})\leq x}N(\mathfrak{m})1_{\mathfrak{m}|\mathfrak{n}'}g(\Omega(\mathfrak{n}')) \\ &= \E_{\mathfrak{m}\in S_i}^{\log} \E_{N(\mathfrak{n}')\leq x}N(\mathfrak{m})1_{\mathfrak{m}|\mathfrak{n}'}g(\Omega(\mathfrak{n}'))+O_{S_i,K}(x^{-\frac{1}{d}}).
\end{align*} Now we notice that \begin{align*}
    &\Big|\E_{N(\mathfrak{n})\leq x}g(\Omega(\mathfrak{n}))-\E_{\mathfrak{m}\in S_2}^{\log}\E_{N(\mathfrak{n})\leq x/N(\mathfrak{m})}g(\Omega(\mathfrak{m}\mathfrak{n}))\Big|^2\\ &\leq \Big|\E_{N(\mathfrak{n})\leq x}g(\Omega(\mathfrak{n}))-\E_{\mathfrak{m}\in S_2}^{\log}\E_{N(\mathfrak{n}')\leq x}N(\mathfrak{m})1_{\mathfrak{m}|\mathfrak{n}'}g(\Omega(\mathfrak{n}'))\Big|^2+O_{S_2,K}(x^{-\frac{1}{d}})\\ &\leq \E_{N(\mathfrak{n})\leq x}\Big|\E_{\mathfrak{m}\in S_2}^{\log}(1-N(\mathfrak{m})1_{\mathfrak{m}|\mathfrak{n}})\Big|^2+O_{S_2,K}(x^{-\frac{1}{d}}) \\ &\leq \eta+O_{S_2,K}(x^{-\frac{1}{d}}),
\end{align*} where we have used the bound $|g|\leq 1$ extensively alongside Proposition \ref{orthogonalityProp}, the Cauchy-Schwarz inequality, and the assumption on $S_2$. Since $\Omega(\mathfrak{mn})=\Omega(\mathfrak{m})+\Omega(\mathfrak{n})$ we have \begin{align}\label{eq6}
    \E_{N(\mathfrak{m})\leq x}g(\Omega(\mathfrak{m}))&=\E_{\mathfrak{m}\in S_2}^{\log}\E_{N(\mathfrak{n})\leq x/N(\mathfrak{m})}g(\Omega(\mathfrak{m})+\Omega(\mathfrak{n}))+O_K(\eta^{\frac{1}{2}})+O_{S_2,K}(x^{-\frac{1}{2d}}) \\ \nonumber &= \E_{\mathfrak{m}\in S_2}^{\log}\E_{N(\mathfrak{n})\leq x/N(\mathfrak{m})}g(k+\Omega(\mathfrak{n}))+O_K(\eta^{\frac{1}{2}})+O_{S_2,K}(x^{-\frac{1}{2d}}).
\end{align} A similar argument with $S_1$ instead of $S_2$ and $g(n+k-1)$ in place of $g(n)$ gives \begin{align}\label{eq7}
    \E_{N(\mathfrak{m})\leq x}g(\Omega(\mathfrak{m})+k-1)&=\E_{\mathfrak{p}\in S_1}^{\log{}}\E_{N(\mathfrak{n})\leq x/N(\mathfrak{p})}g(\Omega(\mathfrak{p})+\Omega(\mathfrak{n})+k-1)+O_K(\eta^{\frac{1}{2}})+O_{S_1,K}(x^{-\frac{1}{2d}}) \\ \nonumber &=\E_{\mathfrak{p}\in S_1}^{\log{}}\E_{N(\mathfrak{n})\leq x/N(\mathfrak{p})}g(k+\Omega(\mathfrak{n}))+O_K(\eta^{\frac{1}{2}})+O_{S_1,K}(x^{-\frac{1}{2d}}). 
\end{align} Finally, we know that there exists an enumeration of $S_1,S_2$ such that $S_1=\{\mathfrak{p}_{1},\mathfrak{p}_2,...,\mathfrak{p}_j\}$ and $S_2=\{\mathfrak{m}_1,\mathfrak{m}_2,...,\mathfrak{m}_j\}$ and $(1-\eta)N(\mathfrak{p}_i)\leq N(\mathfrak{m}_i)\leq (1+\eta)N(\mathfrak{p}_i)$ for all $i=1,...j$. Then it easily follows that $\E_{N(\mathfrak{n})\leq x/N(\mathfrak{m}_i)}g(k+\Omega(\mathfrak{n}))=\E_{N(\mathfrak{n})\leq x/N(\mathfrak{p}_i)}g(k+\Omega(\mathfrak{n}))+O_K(\eta)$. Taking logarithmic averages over $S_1$ and $S_2$ yields \begin{align}\label{eq8}
    \E_{\mathfrak{m}\in S_2}^{\log{}}\E_{N(\mathfrak{\mathfrak{n}})\leq x/N(\mathfrak{m})}g(k+\Omega(\mathfrak{n}))=\E_{\mathfrak{p}\in S_1}^{\log{}}\E_{N(\mathfrak{n})\leq x/N(\mathfrak{p})}g(k+\Omega(\mathfrak{n}))+O_K(\eta).
\end{align} From applying (\ref{eq8}) to (\ref{eq6}) and then applying (\ref{eq7}) we then deduce that \begin{align*}
\E_{N(\mathfrak{m})\leq x}g(\Omega(\mathfrak{m}))=\E_{N(\mathfrak{m})\leq x}g(\Omega(\mathfrak{m})+k-1)+O_K(\eta^{\frac{1}{2}})+O_{S_1,S_2,K}(x^{-\frac{1}{2d}}).
\end{align*} This holds for all $k\geq N$, so that one then has \begin{align}
    \E_{N(\mathfrak{m})\leq x}g(\Omega(\mathfrak{m})+k_1)=\E_{N(\mathfrak{m})\leq x}g(\Omega(\mathfrak{m})+k_2)+O_K(\eta^{\frac{1}{2}})+O_{S_1,S_2,K}(x^{-\frac{1}{2d}})
\end{align} for all $k_1,k_2\geq N$. The theorem then follows upon taking $x\rightarrow \infty$; since $\eta>0$ was arbitrary, we are finished.

\end{proof}

\section{Proof of Proposition \ref{prop2.2analogue} } 

\subsection{Chebychev-Type Bounds for $\pi_K(x)$}

We have this upper bound for prime ideals: \begin{lem}\label{UpperBoundLemma}
    Let $\alpha\geq 1$ and $x\geq e$. One then has \begin{align*}
        \sum_{\mathfrak p \colon x<N(\mathfrak{p})\leq \alpha x}1\leq (\alpha \log \alpha) \frac{x}{\log x}+O_{\alpha,K}(x^{1-\frac{1}{d}}\log x).
    \end{align*}
\end{lem}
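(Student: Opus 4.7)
The plan is to employ the Chebyshev-style inequality
\[
(\pi_K(\alpha x)-\pi_K(x))\log x \;\le\; \theta_K(\alpha x)-\theta_K(x),
\]
where $\theta_K(y):=\sum_{N(\mathfrak{p})\le y}\log N(\mathfrak{p})$ (valid since every $\mathfrak{p}$ in the range satisfies $\log N(\mathfrak{p})\ge\log x$), and then to estimate the right-hand side via a Mertens-type bound for prime ideals.

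First I would derive the Mertens-type estimate $\sum_{N(\mathfrak{p})\le y}\log N(\mathfrak{p})/N(\mathfrak{p})=\log y+O_K(1)$ by computing $T_K(y):=\sum_{N(\mathfrak{m})\le y}\log N(\mathfrak{m})$ in two ways. Abel summation against the density $A(y)=c_Ky+O_K(y^{1-1/d})$ gives $T_K(y)=c_Ky\log y-c_Ky+O_K(y^{1-1/d}\log y)$. On the other hand, the identity $\log N(\mathfrak{m})=\sum_{\mathfrak{n}\mid\mathfrak{m}}\Lambda_K(\mathfrak{n})$, where $\Lambda_K(\mathfrak{n})=\log N(\mathfrak{p})$ for $\mathfrak{n}=\mathfrak{p}^k$ and $0$ otherwise, together with a swap of summation, gives
\[
T_K(y)=\sum_{N(\mathfrak{n})\le y}\Lambda_K(\mathfrak{n})\,A\!\left(y/N(\mathfrak{n})\right)=c_Ky\sum_{N(\mathfrak{n})\le y}\frac{\Lambda_K(\mathfrak{n})}{N(\mathfrak{n})}+O_K(y),
\]
where the $O_K(y)$ combines the density error in each summand with a crude Chebyshev bound $\psi_K(y):=\sum_{N(\mathfrak{n})\le y}\Lambda_K(\mathfrak{n})=O_K(y)$ (immediate from classical Chebyshev for $\mathbb{Q}$ plus the fact that at most $d$ prime ideals sit above each rational prime). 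Equating the two expressions, dividing by $c_Ky$, and absorbing the absolutely convergent prime-power tail $\sum_{k\ge 2,\,\mathfrak{p}}\log N(\mathfrak{p})/N(\mathfrak{p})^k=O_K(1)$ yields the Mertens estimate.

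Subtracting this estimate at $\alpha x$ from that at $x$ produces $\sum_{x<N(\mathfrak{p})\le\alpha x}\log N(\mathfrak{p})/N(\mathfrak{p})=\log\alpha+O_{\alpha,K}(\varepsilon(x))$, and since $N(\mathfrak{p})\le\alpha x$ throughout the range, multiplying by $\alpha x$ gives $\theta_K(\alpha x)-\theta_K(x)\le\alpha x\log\alpha+O_{\alpha,K}(\alpha x\,\varepsilon(x))$; dividing by $\log x$ then completes the argument. The main obstacle is making $\varepsilon(x)$ small enough to produce the claimed error $O_{\alpha,K}(x^{1-1/d}\log x)$: the naive Mertens estimate has error $O_K(1)$, which is useless after subtraction. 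To extract the required precision one must sharpen the two-way computation of $T_K$ to the form $\sum_{N(\mathfrak{p})\le y}\log N(\mathfrak{p})/N(\mathfrak{p})=\log y+C_K+O_K(y^{-1/d}\log y)$ for a limiting constant $C_K$, so that $C_K$ cancels in the difference and $\varepsilon(x)=O_K(x^{-1/d}\log x)$ emerges. This is a careful bookkeeping exercise, relying crucially on the power-saving $O_K(y^{1-1/d})$ in the ideal density to control $\sum_{N(\mathfrak{n})\le y}\Lambda_K(\mathfrak{n})/N(\mathfrak{n})^{1-1/d}$ via partial summation against $\psi_K$.
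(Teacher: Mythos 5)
Your frame---bounding $(\pi_K(\alpha x)-\pi_K(x))\log x$ by $\theta_K(\alpha x)-\theta_K(x)$ and then by $\alpha x\sum_{x<N(\mathfrak{p})\le \alpha x}\log N(\mathfrak{p})/N(\mathfrak{p})$---is sound, but the quantitative input you need at the last step is not obtainable the way you describe, and this is a genuine gap. The two-way computation of $T_K(y)$ only yields $\sum_{N(\mathfrak{n})\le y}\Lambda_K(\mathfrak{n})/N(\mathfrak{n})=\log y+O_K(1)$: in the divisor swap each term carries an error $O_K\big((y/N(\mathfrak{n}))^{1-1/d}\big)$, and partial summation against $\psi_K(t)\ll_K t$ gives $\sum_{N(\mathfrak{n})\le y}\Lambda_K(\mathfrak{n})/N(\mathfrak{n})^{1-1/d}\asymp y^{1/d}$, so the total swap error is of size $y$, i.e.\ $O_K(1)$ after dividing by $c_Ky$---your own closing sentence reproduces exactly this bound, not a power-saving one. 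The refinement $\sum_{N(\mathfrak{p})\le y}\log N(\mathfrak{p})/N(\mathfrak{p})=\log y+C_K+O_K(y^{-1/d}\log y)$ is far beyond bookkeeping: already over $\mathbb{Q}$ the statement $\sum_{n\le x}\Lambda(n)/n=\log x-\gamma+o(1)$ is equivalent to the prime number theorem, and a power-saving error is stronger still, so invoking it here would be circular in a paper whose point is an elementary proof of the prime ideal theorem. With the honest $O_K(1)$ error your method gives only $\pi_K(\alpha x)-\pi_K(x)\le(\alpha\log\alpha)\frac{x}{\log x}+O_K\big(\frac{\alpha x}{\log x}\big)$, whose error is comparable to the main term; this is too weak both for Lemma \ref{UpperBoundLemma} and for its use in Proposition \ref{propPrimesInAnnuli}(ii), where the coefficient of $16^x/x$ must tend to $0$ with $\epsilon$.

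The paper avoids this loss by never estimating the two prime sums separately: it forms the weighted difference $\log f(\alpha x)-\alpha\log f(x)$ with $f(y)=\prod_{N(\mathfrak{m})\le y}N(\mathfrak{m})$ (so $\log f=T_K$), a binomial-coefficient-style comparison in which the contributions of all prime powers $N(\mathfrak{p})^c\le x$ cancel termwise \emph{before} any error beyond the accumulated density error $O_{\alpha,K}(x^{1-1/d}(\log x)^2)$ is incurred; what survives is supported on $x<N(\mathfrak{p})^c\le\alpha x$ and is bounded below by $c_K\log x\sum_{x<N(\mathfrak{p})\le\alpha x}1$, which against the asymptotic $\log\theta_\alpha(x)=c_K(\alpha\log\alpha)x+O_{\alpha,K}(x^{1-1/d}\log x)$ gives the lemma. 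If you replace ``subtract the Mertens estimate at $\alpha x$ from that at $x$'' by the exact comparison of $T_K(\alpha x)$ with $\alpha T_K(x)$, so that the bulk cancels identically rather than being estimated twice with $O(1)$ relative error, your argument becomes essentially the paper's proof.
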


\begin{proof}
     The classical way to derive weak-type prime bounds is by studying the prime factorizations of binomial coefficients; in particular, the central binomial coefficient $\binom{2n}{n}=\frac{(2n)!}{(n!)^2}$ is at least as large as the product of primes $n < p < 2n$.\\

In the number field setting, we instead follow this line of reasoning. Define $f(x):=\prod_{N(\mathfrak{m})\leq x}N(\mathfrak{m})$. One can observe that $\log f(x)=c_Kx\log x-c_Kx+O_K(x^{1-\frac{1}{d}}\log x)$ (by using Abel summation and ideal density arguments; refer to Proposition \ref{AppendixAProp} in Appendix A for a more general argument). Then it follows, if we further define $\theta_\alpha(x):=f(\alpha x)/f(x)^\alpha$, that \begin{align}\label{thetaFunctionAsympt}
    \log \theta_\alpha(x)=c_K(\alpha \log \alpha)x+O_{\alpha,K}(x^{1-1/d}\log x).
\end{align} At the same time, by using multiplicity of the norm, one has that $f(x)=N\Big(\prod_{N(\mathfrak{m})\leq x}\mathfrak{m}\Big)$. Appealing to unique factorization, we consider the factorization of the ideal $\prod_{N(\mathfrak{m})\leq x}\mathfrak{m}$. Let $\mathfrak{p}$ be a prime ideal. We know that there are a total of $\frac{c_Kx}{N(\mathfrak{p})}+O_K(x^{1-\frac{1}{d}}N(\mathfrak{p})^{-1})$ ideals divisible by $\mathfrak{p}$ over the set of ideals with norm at most $x$, and $\frac{c_Kx}{N(\mathfrak{p})^2}+O_K(x^{1-\frac{1}{d}}N(\mathfrak{p})^{-2})$ divisible by $\mathfrak{p}^2$, and so on. 
    Then, 
     the multiplicity $\nu_{\mathfrak p} $ of a prime ideal $\mathfrak{p}$ in the ideal $\prod_{N(\mathfrak{m})\leq x}\mathfrak{m}$ is given by 
     \begin{align*}
\nu_\mathfrak{p}\Big(\prod_{N(\mathfrak{m})\leq x}\mathfrak{m}\Big)
    &=   
c_Kx\sum_{c=1}^{\log_{N(\mathfrak{p})}(x)}N(\mathfrak{p})^{-c}+O_K\Big(x^{1-\frac{1}{d}}\sum_{c=1}^{\log_{N(\mathfrak{p})}(x)}N(\mathfrak{p})^{-c}\Big) \\ &=c_Kx\sum_{c=1}^{\log_{N(\mathfrak{p})}(x)}N(\mathfrak{p})^{-c}+O_K\Big(\frac{x^{1-\frac{1}{d}}\log x}{N(\mathfrak{p})\log N(\mathfrak{p})}\Big).
\end{align*} 

One can then estimate \begin{align*}
   \log f(x)&= \sum_{N(\mathfrak{p})\leq x}\nu_\mathfrak{p}\Big(\prod_{N(\mathfrak{m})\leq x}\mathfrak{m}\Big)\cdot \log N(\mathfrak{p})\\ &=c_Kx\sum_{N(\mathfrak{p})\leq x}\log N(\mathfrak{p})\sum_{c=1}^{\log_{N(\mathfrak{p})}(x)}N(\mathfrak{p})^{-c}+O_K\Big(x^{1-\frac{1}{d}}\log x \sum_{N(\mathfrak{p})\leq x}\frac{1}{N(\mathfrak{p})}\Big) \\ &= c_Kx\sum_{N(\mathfrak{p})\leq x}\log N(\mathfrak{p})\sum_{c=1}^{\log_{N(\mathfrak{p})}(x)}N(\mathfrak{p})^{-c}+O_K\Big(x^{1-\frac{1}{d}}(\log x)^2\Big).
\end{align*}

To deduce the last line above, we used that $\sum_{N(\mathfrak{p})\leq x}\frac{1}{N(\mathfrak{p})}\leq \sum_{N(\mathfrak{m})\leq x}\frac{1}{N(\mathfrak{m})}$ and then applied Proposition \ref{AppendixAProp} in Appendix A\footnote{It is worth noting that one actually has the better bound $\sum_{N(\mathfrak{p})\leq x}\frac{1}{N(\mathfrak{p})}\ll_K \log \log x$, which one can get quite immediately from following Merten's original proof of the result in $\mathbb{Q}$ (see \cite{mertens}) in this more general setting; but this is immaterial for our purposes.}. We now estimate the function $\theta_\alpha(x)$ using this factorization estimate.  One can compute \begin{align} \label{numEq}
        \log f(\alpha x)&=\alpha c_Kx\sum_{N(\mathfrak{p})\leq \alpha x}\log N(\mathfrak{p})\sum_{c=1}^{\log_{N(\mathfrak{p})}(\alpha x)}N(\mathfrak{p})^{-c}+O_{\alpha,K}(x^{1-\frac{1}{d}}(\log x)^2), \\ \alpha \log f(x)&= \label{denomEq} \alpha c_Kx\sum_{N(\mathfrak{p})\leq x}\log N(\mathfrak{p})\sum_{c=1}^{\log_{N(\mathfrak{p})}(x)}N(\mathfrak{p})^{-c}+O_{\alpha,K}(x^{1-\frac{1}{d}}(\log x)^2)
    \end{align} so that using the relation $\log \theta_\alpha(x)=\log f(\alpha x)-\alpha \log f(x)$, one observes that \begin{align}\label{logThetaEquality}
        \log \theta_\alpha(x)= \alpha c_Kx\sum_{N(\mathfrak{p})\leq \alpha x}\log N(\mathfrak{p})\sum_{c=1+\lfloor \log_{N(\mathfrak{p})}(x)\rfloor}^{\log_{N(\mathfrak{p})}(\alpha x)}N(\mathfrak{p})^{-c}+O_{\alpha,K}(x^{1-\frac{1}{d}}(\log x)^2).
    \end{align} Truncating the sum to $x<N(\mathfrak{p})\leq \alpha x$ one surely has \begin{align*}
        \log \theta_\alpha(x)&\geq \alpha c_Kx\sum_{x<N(\mathfrak{p})\leq \alpha x}\log N(\mathfrak{p})\sum_{c=1}^{\log_{N(\mathfrak{p})}(\alpha x)}N(\mathfrak{p})^{-c}+O_{\alpha,K}(x^{1-\frac{1}{d}}(\log x)^2)\end{align*} and since the inner sum contributes at least the term $c=1$, \begin{align*}&\geq \alpha c_Kx\sum_{x<N(\mathfrak{p})\leq \alpha x}\frac{\log N(\mathfrak{p})}{N(\mathfrak{p})}+O_{\alpha,K}(x^{1-\frac{1}{d}}(\log x)^2) \\ &\geq \alpha c_Kx\frac{\log (\alpha x)}{\alpha x}\sum_{x<N(\mathfrak{p})\leq \alpha x}1+O_{\alpha,K}(x^{1-\frac{1}{d}}(\log x)^2) \\ &\geq c_K\log x\sum_{x<N(\mathfrak{p})\leq \alpha x}1+O_{\alpha,K}(x^{1-\frac{1}{d}}(\log x)^2),
    \end{align*} assuming $x\geq e$. Hence by applying (\ref{thetaFunctionAsympt}) one then has \begin{align*}
        \sum_{x<N(\mathfrak{p})\leq \alpha x}1\leq (\alpha \log \alpha )\frac{x}{\log x}+O_{\alpha,K}(x^{1-\frac{1}{d}}\log x).
    \end{align*} This completes the proof of the lemma.
\end{proof}

\begin{lem}\label{lowerBoundLemma}
    Let $\pi_K(y)$ be the number of prime ideals in $\mathcal{O}_K$ with norm at most $y$. Then one has that \begin{align*}
        \pi_K(y)\geq \frac{y}{e\log y}+O_{K}(y^{1-1/d}\log y).
    \end{align*}
\end{lem}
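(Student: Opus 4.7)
My plan is to mirror the argument of Lemma \ref{UpperBoundLemma}, but replace the pointwise lower bound on the inner sum $\sum_c N(\mathfrak{p})^{-c}$ with a pointwise upper bound, so as to convert the asymptotic \eqref{thetaFunctionAsympt} into a \emph{lower} bound on $\pi_K$. Starting from equation \eqref{logThetaEquality}, I rewrite the double sum on the right as a sum over prime powers $\mathfrak{p}^c$ with $N(\mathfrak{p}^c)\in(x,\alpha x]$, and split it into the $c=1$ piece (which requires $N(\mathfrak{p})\in(x,\alpha x]$) and the $c\geq 2$ tail (which requires $N(\mathfrak{p})\leq(\alpha x)^{1/c}$).

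For the $c=1$ piece, the bound $N(\mathfrak{p})>x$ gives $\log N(\mathfrak{p})/N(\mathfrak{p})\leq\log(\alpha x)/x$, so this piece is at most $\log(\alpha x)\bigl(\pi_K(\alpha x)-\pi_K(x)\bigr)/x$. For the $c\geq 2$ tail, $N(\mathfrak{p})^c>x$ gives $1/N(\mathfrak{p})^c<1/x$; summing $\log N(\mathfrak{p})/x$ over primes with $N(\mathfrak{p})\leq(\alpha x)^{1/c}$ and invoking the Chebyshev-type bound $\theta_K(y)\ll_K y$ (which follows from Lemma \ref{UpperBoundLemma} by dyadic summation), then summing the geometric tail in $c$, controls this contribution by $O_K(\sqrt{\alpha/x})$, a lower order error after multiplication by $\alpha c_K x$.

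Combining these estimates with \eqref{thetaFunctionAsympt} gives
\begin{align*}
c_K\alpha\log\alpha\cdot x \;\leq\; \alpha c_K\log(\alpha x)\bigl(\pi_K(\alpha x)-\pi_K(x)\bigr) + O_{\alpha,K}\bigl(x^{1-\tfrac{1}{d}}(\log x)^2\bigr),
\end{align*}
which rearranges to $\pi_K(\alpha x)\geq x\log\alpha/\log(\alpha x) + O_{\alpha,K}(\text{err})$. To extract the optimal constant $1/e$, I take $\alpha=e$, which maximizes $(\log\alpha)/\alpha$; setting $y=ex$ gives $\log(\alpha x)=1+\log(y/e)=\log y$, and $(y/\alpha)\log\alpha/\log(\alpha x)=y/(e\log y)$, which is the desired main term.

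The main obstacle I anticipate is careful error management: the $c\geq 2$ tail, the Chebyshev input $\theta_K(y)\ll_K y$, and the error from \eqref{thetaFunctionAsympt} must all simultaneously fit inside the stated $O_K(y^{1-1/d}\log y)$ once one divides through by $\log(\alpha x)$ to pass from the $\theta_K$-type estimate back to $\pi_K$. In particular, one should verify that the $\sqrt{y}$-scale contribution from $c\geq 2$ is dominated by the density-type error $y^{1-1/d}\log y$ coming from the ideal counting function for $d\geq 2$.
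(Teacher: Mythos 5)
Your argument is essentially the paper's: both start from \eqref{logThetaEquality}, upper-bound the prime-power sum so that the asymptotic \eqref{thetaFunctionAsympt} turns into a lower bound for $\pi_K$, and then optimize $\alpha=e$ with $y=\alpha x$ (your bookkeeping $\log(\alpha x)=\log y$, and the relaxation $\pi_K(\alpha x)\geq \pi_K(\alpha x)-\pi_K(x)$, are correct). The only real deviation is your treatment of the higher prime powers. The paper never splits off $c\geq 2$: it bounds each term of the inner sum by $1/x$ and uses $\log N(\mathfrak{p})\,\lfloor\log_{N(\mathfrak{p})}(\alpha x)\rfloor\leq\log(\alpha x)$, so every prime contributing a power in $(x,\alpha x]$ is absorbed into the count $\pi_K(\alpha x)$ that appears in the final inequality anyway, and no new error term arises. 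You instead control the $c\geq 2$ tail via the Chebyshev bound $\sum_{N(\mathfrak{p})\leq y}\log N(\mathfrak{p})\ll_K y$ (which does follow from Lemma \ref{UpperBoundLemma} by dyadic summation), at the cost of an extra $O_{\alpha,K}(\sqrt{x})$ after multiplying by $\alpha c_K x$. As you note, for $d\geq 2$ this is harmless since $\sqrt{y}\ll y^{1-1/d}\log y$; but the lemma is stated for an arbitrary number field, and for $K=\mathbb{Q}$ (so $d=1$) the claimed error is $O(\log y)$, which your $\sqrt{y}$ term does not meet. This shortfall is cosmetic — it is irrelevant for the only application, Proposition \ref{propPrimesInAnnuli}, and it vanishes entirely if you handle the $c\geq 2$ powers as the paper does, i.e.\ keep them inside $\pi_K(\alpha x)$ rather than estimating them separately; in exchange, your variant yields the (here unneeded) slightly stronger intermediate statement that the primes in the annulus $(x,\alpha x]$ alone already account for the main term.
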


\begin{proof}
    Recall that, from (\ref{logThetaEquality}), one has that \begin{align*}
        \log \theta_\alpha(x)=\alpha c_Kx\sum_{N(\mathfrak{p})\leq \alpha x}\log N(\mathfrak{p})\sum_{c=1+\lfloor \log_{N(\mathfrak{p})}(x)\rfloor}^{\log_{N(\mathfrak{p})}(\alpha x)}N(\mathfrak{p})^{-c}+O_{\alpha,K}(x^{1-\frac{1}{d}}(\log x)^2).
    \end{align*} The inner sum consists of $\lfloor \log_{N(\mathfrak{p})}(\alpha x)\rfloor -\lfloor \log_{N(\mathfrak{p})}(x)\rfloor$ terms, with each term being at most \begin{align*}
        N(\mathfrak{p})^{-(1+\lfloor \log_{N(\mathfrak{p})}(x)\rfloor)}\leq N(\mathfrak{p})^{-\log_{N(\mathfrak{p})}(x)}=1/x
    \end{align*} in magnitude. Hence \begin{align*}
        \log \theta_\alpha (x)&\leq \alpha c_K\sum_{N(\mathfrak{p})\leq \alpha x}\log N(\mathfrak{p})\Big(\lfloor\log_{N(\mathfrak{p})}(\alpha x)\rfloor -\lfloor \log_{N(\mathfrak{p})}(x)\rfloor\Big)+O_{\alpha,K}(x^{1-\frac{1}{d}}(\log x)^2) \\ &\leq \alpha c_K \sum_{N(\mathfrak{p})\leq \alpha x}\log N(\mathfrak{p})\lfloor \log_{N(\mathfrak{p})}(\alpha x)\rfloor +O_{\alpha,K}(x^{1-\frac{1}{d}}(\log x)^2) \\ &\leq \alpha c_K\log(\alpha x)\sum_{N(\mathfrak{p})\leq \alpha x}1+O_{\alpha,K}(x^{1-\frac{1}{d}}(\log x)^2).
    \end{align*} Then since $\log \theta_\alpha (x)=c_K(\alpha \log \alpha )x+O_{\alpha,K}(x^{1-1/d}\log x)$, we produce that \begin{align*}
        \sum_{N(\mathfrak{p})\leq \alpha x}1\geq \frac{(\log \alpha)x}{\log(\alpha x)}+O_{\alpha,K}(x^{1-1/d}\log x).
    \end{align*} Setting $y=\alpha x$ then gives that \begin{align*}
        \sum_{N(\mathfrak{p})\leq y}1\geq \frac{\log \alpha}{\alpha}\frac{y}{\log y}+O_{\alpha,K}(y^{1-1/d}\log y).
    \end{align*} Choosing $\alpha =e$, which maximizes $\log \alpha /\alpha$, then gives the result.
\end{proof}

Using Lemmas \ref{UpperBoundLemma} and \ref{lowerBoundLemma}, we can now prove the following proposition: \begin{prop}\label{propPrimesInAnnuli}
    Let $\mathbb{P}$ be the set of prime ideals in $\mathcal{O}_K$, and let $A(x,y)$ denote the set of ideals $\mathfrak{m}$ in $\mathcal{O}_K$ satisfying $x<N(\mathfrak{m})\leq y$. Then there are $x_0\geq 1$ and $\epsilon_0>0$ such that \begin{enumerate}
        \item[(i)] $|\mathbb{P}\cap A(16^x,16^{x+1})|\geq \frac{16^x}{x}$ for all $x\geq x_0$, and
        \item[(ii)]$|\mathbb{P}\cap A(16^x,16^{x+\epsilon})|\leq \frac{\sqrt{\epsilon}16^x}{x}$ for all $x\geq x_0$ and $\epsilon\in (0,\epsilon_0]$.
    \end{enumerate}
\end{prop}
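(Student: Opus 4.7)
The plan is to deduce both parts from Lemmas~\ref{UpperBoundLemma} and~\ref{lowerBoundLemma}: part (ii) follows almost directly from the upper bound in Lemma~\ref{UpperBoundLemma}, while part (i) requires a short dyadic iteration to produce a Chebyshev-type upper bound on $\pi_K$ that can be subtracted from the lower bound in Lemma~\ref{lowerBoundLemma}.

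First I would handle (ii): applying Lemma~\ref{UpperBoundLemma} with $\alpha = 16^\epsilon$ and the lemma's variable set to $16^x$ gives
\[
|\mathbb{P} \cap A(16^x, 16^{x+\epsilon})|
\leq 16^\epsilon \epsilon \cdot \frac{16^x}{x} + O_{\epsilon, K}\!\left(16^{x(1-1/d)} x\right).
\]
Since $16^\epsilon \sqrt\epsilon < 1$ for every $\epsilon$ below some small $\epsilon_0$, the main term is strictly less than $\sqrt\epsilon \cdot 16^x/x$, and for $x \geq x_0(\epsilon_0, K)$ the power-saving error is absorbed into the remaining slack.

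Next I would address (i) by combining a lower bound on $\pi_K(16^{x+1})$ with an upper bound on $\pi_K(16^x)$. Lemma~\ref{lowerBoundLemma} applied at $16^{x+1}$ yields
\[
\pi_K(16^{x+1}) \geq \frac{16^{x+1}}{e \log 16^{x+1}} + O = \frac{4}{e \log 2} \cdot \frac{16^x}{x} + O\!\left(\frac{16^x}{x^2}\right),
\]
with leading constant roughly $2.12$. For the upper bound I would iterate Lemma~\ref{UpperBoundLemma} dyadically with $\alpha = 2$: the telescoping
\[
\pi_K(2^J) = \sum_{k=1}^J \bigl[\pi_K(2^k) - \pi_K(2^{k-1})\bigr] + O(1),
\]
together with the bound $\pi_K(2^k) - \pi_K(2^{k-1}) \leq 2^k/(k-1) + O$, produces a geometric-like sum whose consecutive ratios tend to $2$, so it is dominated by its final term and evaluates to $(2 + o(1)) \cdot 2^J/J$. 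Specializing to $2^J = 16^x$ (so $J = 4x$) then gives $\pi_K(16^x) \leq (1/2 + o(1)) \cdot 16^x/x$.

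Subtracting, $\pi_K(16^{x+1}) - \pi_K(16^x) \geq \bigl(\tfrac{4}{e \log 2} - \tfrac{1}{2}\bigr) \cdot 16^x/x + o(16^x/x)$, and since $\tfrac{4}{e \log 2} - \tfrac{1}{2} \approx 1.62 > 1$, for $x \geq x_0$ this exceeds $16^x/x$, proving (i). The main obstacle I anticipate is bookkeeping the error terms from both lemmas through the dyadic iteration and verifying that the explicit constants line up; the base $16 = 2^4$ is convenient precisely because the dyadic-$2$ Chebyshev estimate becomes clean at $J = 4x$, and the numerical margin $\tfrac{4}{e \log 2} > \tfrac{3}{2}$ is ultimately what provides the slack that makes the argument work.
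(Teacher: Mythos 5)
Your proposal is correct and follows essentially the same route as the paper: part (ii) via Lemma \ref{UpperBoundLemma} with $\alpha=16^{\epsilon}$, and part (i) by subtracting a dyadic Chebyshev upper bound for $\pi_K(16^x)$ (built from Lemma \ref{UpperBoundLemma} with $\alpha=2$) from the Lemma \ref{lowerBoundLemma} lower bound at $16^{x+1}$, with the same numerical margin $\tfrac{16}{e\log 16}\approx 2.12$ doing the work. The only difference is cosmetic: you evaluate the dyadic sum by geometric domination of its last term, where the paper uses an integral comparison and l'H\^opital, and both yield an upper bound of the same order for $\pi_K(16^x)$.
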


\begin{proof}
    The upper bound (ii) is almost immediate by taking $\alpha=16^\epsilon$ in Lemma \ref{UpperBoundLemma}, so that one has \begin{align*}
        \sum_{16^x<N(\mathfrak{p})\leq 16^{x+\epsilon}}1&\leq \epsilon(\log 16)16^\epsilon\frac{16^x}{x\log 16}+O\Big(16^{x(1-\frac{1}{d})}x\Big) \\ &\leq 2\epsilon \frac{16^x}{x}+O\Big(16^{x(1-\frac{1}{d})}x\Big)\quad\quad (\epsilon\leq 1/4).
    \end{align*} Since $2\epsilon\leq \sqrt{\epsilon}$ for $\epsilon\leq 1/4$, and since the order of growth of $16^x/x$ is much larger than that of $16^{x(1-\frac{1}{d})}x$, the result follows upon taking $\epsilon_0=1/4$.\\

    The lower bound (i) follows from the fact that \begin{align*}|\mathbb{P}\cap A(16^x,16^{x+1})|&=|\mathbb{P}\cap A(0,16^{x+1})|-|\mathbb{P}\cap A(0,16^x)| \\ &= \pi_K(16^{x+1})-\pi_K(16^x)\end{align*} and the dyadic decomposition $(\frac{1}{2},16^x]=\bigcup_{k=0}^{4x}\Big(\frac{16^x}{2^{k+1}},\frac{16^x}{2^{k}}\Big]$ gives \begin{align*}
        \pi_K(16^x)&=\sum_{k=0}^{4x}\sum_{\frac{16^x}{2^{k+1}}< N(\mathfrak{p})\leq \frac{16^x}{2^{k}}}1\\ &\leq O_K(1)+ \sum_{k=0}^{4x-2}\Big[(2\log 2)\frac{16^x/2^{k+1}}{\log (16^x/2^{k+1})}+O_K\Big((16^x/2^{k+1})^{1-\frac{1}{d}}x\Big)\Big]\quad (\text{from Lemma \ref{UpperBoundLemma}}) \\ &= \sum_{k=0}^{4x-2}\frac{2^{4x-k}}{4x-k-1}+O_K\Big(16^{x(1-\frac{1}{d})}x\Big).
    \end{align*} Noting that the function $g(k):=\frac{2^{4x-k}}{4x-k-1}$ is decreasing for $k\in [0,4x-1-\frac{1}{\log 2}]$, we deduce \begin{align*}
        \sum_{k=0}^{4x-2}\frac{2^{4x-k}}{4x-k-1}&\leq \frac{2^{4x}}{4x-1}+\int_0^{4x-2}\frac{2^{4x-t}}{4x-t-1}\ dt+O(1) \\ &= \frac{2^{4x}}{4x-1}+2\int_1^{4x-1}\frac{2^u}{u}\ du+O(1).
    \end{align*} An application of De l'Hospital's rule provides that $\int_1^{4x-1}\frac{2^u}{u}\ du=(1+o(1))\frac{2^{4x-1}}{(4x-1)\log 2}$, and hence \begin{align*}
        \sum_{k=0}^{4x-2}\frac{2^{4x-k}}{4x-k-1}\leq \frac{2^{4x}}{4x-1}+(1+o(1))\frac{2^{4x}}{(4x-1)\log 2}=\frac{16^x}{x}\Big(\frac{1}{4}+\frac{1}{4\log 2}+o(1)\Big),
    \end{align*}
    
    so that $\pi_K(16^x)\leq (1+o(1))\frac{16^x}{x}+O_K(16^{x(1-\frac{1}{d})})$. One then has that, applying Lemma \ref{lowerBoundLemma}, \begin{align*}
        \pi_K(16^{x+1})-\pi_K(16^x)&\geq \frac{16^{x+1}}{e(x+1)\log 16}-(1+o(1))\frac{16^x}{x}+O_K(16^{x(1-\frac{1}{d})})\\ &= \Big[\frac{16}{e\log 16}-1+o(1)\Big]\cdot \frac{16^x}{x}+O_K(16^{x(1-\frac{1}{d})}) \\ &\geq \frac{16^x}{x}
    \end{align*} for sufficiently large $x$. This completes the proof of Proposition \ref{propPrimesInAnnuli}.
\end{proof}

\subsection{Combinatorial Ingredients}

We include two combinatorial lemmas, the first which can be easily extended from Lemma 3.4 in \cite{Richter_2021} to this setting with the help of Proposition \ref{propPrimesInAnnuli}, and the second which is precisely Lemma 3.5 in \cite{Richter_2021}:

\begin{lem}\label{combLemma1}
    Let $x_0$ be as in Proposition \ref{propPrimesInAnnuli}. There exists $\epsilon_1>0$ such that for all $\epsilon\in (0,\epsilon_1]$ and all $\delta\in (0,1)$ there exists $D=D(\epsilon,\delta)\in (0,1)$ with the following property: For all $n\geq x_0$ there are $x,y\in[n,n+1)$ with $\epsilon^4<y-x<\epsilon$ such that \begin{align*}
        |\mathbb{P}\cap A(16^x,16^{x+\delta})|\geq \frac{D 16^n}{n},\quad \text{and}\quad |\mathbb{P}\cap A(16^y,16^{y+\delta})|\geq \frac{D 16^n}{n}.
    \end{align*}
\end{lem}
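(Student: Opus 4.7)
The plan is to produce a ``good set'' $E \subset [n,n+1)$ of $x$-values where $f(x) := |\mathbb{P} \cap A(16^x, 16^{x+\delta})|$ is at least $D\cdot 16^n/n$, to show that $|E|$ is bounded below by an absolute constant $c_0 > 0$ (independent of $n$ and $\delta$), and then apply a pigeonhole argument on $E$ to locate two elements at distance in the window $(\epsilon^4, \epsilon)$.

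For the lower bound on $|E|$, I would first obtain an integral lower bound by swapping sum and integral: each prime $\mathfrak{p}$ with $\log_{16} N(\mathfrak{p}) \in [n+\delta, n+1]$ contributes exactly $\delta$ to $\int_n^{n+1} f(x)\,dx$, so $\int_n^{n+1} f(x)\,dx \geq \delta \cdot |\mathbb{P}\cap A(16^{n+\delta}, 16^{n+1})|$. Combining Proposition \ref{propPrimesInAnnuli}(i) with Lemma \ref{UpperBoundLemma} applied to $\alpha = 16^\delta$ (to control the ``edge'' primes in $A(16^n, 16^{n+\delta})$), this gives $\int f \geq (\delta/2)\cdot 16^n/n$ provided $\delta$ is at most an absolute constant $\delta_0$. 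The pointwise upper bound comes from the same Lemma \ref{UpperBoundLemma}: $f(x) \leq C_0 \delta \cdot 16^n/n$ for every $x \in [n,n+1)$, with an absolute $C_0$. A Markov-type argument with $D := \delta/4$ then yields $|E| \geq 1/(4C_0 - 1) =: c_0$, independent of $\delta$. For $\delta > \delta_0$ I reduce to the previous case via the inclusion $A(16^x, 16^{x+\delta_0}) \subset A(16^x, 16^{x+\delta})$, which only enlarges the good set (after rescaling $D$ appropriately).

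With $|E| \geq c_0$ in hand, choose $\epsilon_1 < c_0^{1/3}$. For any $\epsilon \leq \epsilon_1$, partition $[n,n+1)$ into $\lceil 1/\epsilon\rceil$ consecutive subintervals of length $\leq \epsilon$. If every such subinterval $I$ satisfied $|E \cap I| \leq \epsilon^4$, then $|E| \leq (1/\epsilon)\cdot \epsilon^4 = \epsilon^3 \leq \epsilon_1^3 < c_0$, contradicting the measure lower bound. Hence some $I_j$ has $|E \cap I_j| > \epsilon^4$, so $E \cap I_j$ cannot be contained in any interval of length $\leq \epsilon^4$; this yields $x, y \in E \cap I_j$ with $\epsilon^4 < |y - x| < \epsilon$ (the upper bound coming from the length of $I_j$).

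The main obstacle is getting the pointwise upper bound on $f$ to match the scale of the integral lower bound, so that the Markov step produces a $\delta$-independent constant $c_0$. Using the weaker Chebyshev-type bound $f(x) \leq \sqrt{\delta}\cdot 16^n/n$ from Proposition \ref{propPrimesInAnnuli}(ii) would give only $|E| \gtrsim \sqrt{\delta}$, which shrinks with $\delta$ and would force $\epsilon_1$ to depend on $\delta$; using the sharper $O(\delta)$ bound from Lemma \ref{UpperBoundLemma} with $\alpha = 16^\delta$ is the essential point.
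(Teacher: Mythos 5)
Your overall architecture (lower-bound the measure of the good set $E=\{x\in[n,n+1):f(x)\geq D\,16^n/n\}$ by a Fubini--Markov argument, then pigeonhole over $\epsilon$-subintervals to find two points of $E$ at distance in $(\epsilon^4,\epsilon)$) is reasonable, and the final pigeonhole step and the reduction of $\delta>\delta_0$ to $\delta_0$ by monotonicity are correct; note that the paper itself supplies no proof of Lemma \ref{combLemma1}, deferring to Richter's Lemma 3.4, so there is no in-paper argument to match against. The integral lower bound is also fine provided you control the edge primes with Proposition \ref{propPrimesInAnnuli}(ii) (which gives $|\mathbb{P}\cap A(16^n,16^{n+\delta})|\leq\sqrt{\delta}\,16^n/n$ for every $n\geq x_0$ and $\delta\leq\epsilon_0$) rather than with Lemma \ref{UpperBoundLemma}.

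The genuine gap is the pointwise bound $f(x)\leq C_0\delta\,16^n/n$ for \emph{every} $x\in[n,n+1)$ and \emph{every} $n\geq x_0$ with an absolute $C_0$, which you yourself identify as the essential point. Lemma \ref{UpperBoundLemma} with $\alpha=16^{\delta}$ gives main term $16^{\delta}\delta\,16^{t}/t\leq 32\delta\,16^n/n$, but it also carries an error $O_{\alpha,K}(x^{1-1/d}\log x)=O_{\delta,K}(16^{t(1-1/d)}t)$, and this error is dominated by $\delta\,16^n/n$ only once $16^{n/d}\gg_K n^2/\delta$, i.e.\ for $n\geq n_1(\delta,K)$. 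Since $x_0$ is fixed by Proposition \ref{propPrimesInAnnuli} before $\delta$ is quantified, your argument does not cover $n\in[x_0,n_1(\delta,K))$ when $\delta$ is small, and this is precisely the regime where you need the $\delta$-independent $c_0$ (hence $\delta$-independent $\epsilon_1$); with only Proposition \ref{propPrimesInAnnuli}(ii) as a substitute you get $|E|\gtrsim\sqrt{\delta}$ there, which, as you note, is not enough. Two repairs are available: (a) prove the lemma with the threshold on $n$ allowed to depend on $\delta$ and $K$, which in fact suffices for the paper's application, since Lemma \ref{combLemma2} accepts an arbitrary $x_0$ and the sets $A_i$ in the proof of Proposition \ref{prop2.2analogue} can be chosen inside $s_i\mathbb{N}$ with $s_1$ as large as desired --- but then the statement must be adjusted accordingly; or (b) avoid Lemma \ref{UpperBoundLemma} entirely and work at two scales using only Proposition \ref{propPrimesInAnnuli}: by (i) some interval $I\subset[n,n+1)$ of length at most $\epsilon$ carries at least $\tfrac{\epsilon}{2}\cdot\tfrac{16^n}{n}$ primes (in the sense $\log_{16}N(\mathfrak{p})\in I$); splitting $I$ into subintervals of length $\delta'=\min\{\delta,\epsilon^4\}$, the rich subintervals (those with at least $\tfrac{\delta'}{8}\tfrac{16^n}{n}$ primes) carry at least $\tfrac{\epsilon}{4}\tfrac{16^n}{n}$ primes, and if their left endpoints were all within $\epsilon^4$ of each other, part (ii) applied to a window of length $2\epsilon^4\leq\epsilon_0$ would force $\tfrac{\epsilon}{4}\leq 16\sqrt{2}\,\epsilon^2$, false for $\epsilon$ below an absolute threshold; this yields $x,y$ with $\epsilon^4<y-x<\epsilon$ and $D=\delta'/8$, uniformly for all $n\geq x_0$, which is the uniformity the stated lemma requires.
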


\begin{lem}[Richter \cite{Richter_2021}, Lemma 3.5 ]\label{combLemma2}
    Fix $x_0\geq 1$ and $0<\epsilon<\frac{1}{2}$. Suppose $\mathcal{X}$ is a subset of $\mathbb{R}$ with the property that for every $n\geq x_0$, there exist $x,y\in \mathcal{X}\cap [n,n+1)$ with $\epsilon^4<y-x<\epsilon$. Let $k\geq \lceil 2/\epsilon^4\rceil$. Then for all $n_1,n_2,...,n_k\in \mathbb{N}_{\geq x_0}$ there exist $z,z_1,...,z_k\in \mathcal{X}$ such that \begin{enumerate}
        \item[(I)] $z_i\in[n_i,n_i+1)$ for all $1\leq i\leq k$
        \item[(II)] $z_1+...+z_k\in [z,z+\epsilon)$.
    \end{enumerate}
\end{lem}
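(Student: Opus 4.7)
The strategy is a greedy toggle argument. For each $i$ I apply the hypothesis to obtain two points $x_i < y_i$ in $\mathcal{X} \cap [n_i, n_i+1)$ with $\epsilon^4 < y_i - x_i < \epsilon$. Order the indices $1, \ldots, k$ arbitrarily, set $S_0 := x_1 + \cdots + x_k$, and for each $j = 1, \ldots, k$ let $S_j$ be the sum obtained by replacing $x_i$ with $y_i$ for every $i \leq j$. Then the increments $S_j - S_{j-1} = y_j - x_j$ lie in $(\epsilon^4, \epsilon)$, so the sequence $(S_j)_{j=0}^{k}$ is strictly increasing with consecutive gaps less than $\epsilon$, and the total displacement satisfies $S_k - S_0 > k \epsilon^4 \geq 2$.

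The second step is to locate an integer $n$ with $[n, n+1) \subset (S_0, S_k)$ and $n \geq x_0$. Take $n := \lfloor S_0 \rfloor + 1$; then $n > S_0$ and $n + 1 \leq S_0 + 2 < S_k$, so $[n, n+1) \subset (S_0, S_k)$. Since each $n_i \geq x_0 \geq 1$ we have $S_0 \geq n_1 + \cdots + n_k \geq k x_0 \geq x_0$, so $n \geq x_0$. By hypothesis, $\mathcal{X} \cap [n, n+1)$ is nonempty; pick any $z$ in it. Then $z \in (S_0, S_k)$.

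Finally, let $j$ be the least index with $S_j \geq z$. Such a $j$ exists because $S_k > z$, and $j \geq 1$ because $S_0 < z$. Then $S_{j-1} < z$ and consequently $S_j < S_{j-1} + \epsilon < z + \epsilon$, which gives $S_j \in [z, z + \epsilon)$. Setting $z_i := y_i$ for $i \leq j$ and $z_i := x_i$ otherwise, conditions (I) and (II) hold.

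The argument has no serious obstacle; the main conceptual observation is that the hypothesis $k \geq \lceil 2/\epsilon^4 \rceil$ is exactly what is needed to guarantee a full integer unit interval sits inside $[S_0, S_k]$ (hence contains at least one element of $\mathcal{X}$), while the upper bound $y_i - x_i < \epsilon$ on each toggle ensures that the increasing sequence $(S_j)$ approximates every point of that range from above to within $\epsilon$.
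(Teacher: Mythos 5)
Your proof is correct, and it is essentially the argument the paper relies on: the paper does not reprove this lemma but cites Richter's Lemma 3.5, whose proof is exactly this toggle/switching construction (replace $x_i$ by $y_i$ one index at a time, note the partial sums increase in steps of size in $(\epsilon^4,\epsilon)$ by total more than $2$, hence straddle a full unit interval $[n,n+1)$ with $n\geq x_0$, pick $z\in\mathcal{X}$ there, and stop at the first partial sum $\geq z$). All the small checks in your write-up ($n>S_0\geq x_0$, $n+1\leq S_0+2<S_k$, and $S_j<z+\epsilon$ at the minimal $j$) are sound.
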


We now have the tools necessary to prove Proposition \ref{prop2.2analogue}.

\begin{proof}[Proof of Proposition \ref{prop2.2analogue}] Let $\eta\in (0,1)$ be given. Let $x_0$ and $\epsilon_0$ be as in Proposition \ref{propPrimesInAnnuli}, and $\epsilon_1$ be as in Lemma \ref{combLemma1}. Pick any $\epsilon<\min\{\epsilon_0,\epsilon_1,\frac{\log(1+\eta)}{2\log 16}\}$. Set $N=\lceil 2/\epsilon^4\rceil$ and fix $k\geq N$. We will construct sets $S_1=S_1(\eta,k)$ and $S_2=S_2(\eta,k)$ as specified in Proposition \ref{prop2.2analogue}.

To do this, choose $\delta =\epsilon/k$, and let $D=D(\epsilon,\delta)$ be as in Lemma \ref{combLemma1}. Define \begin{align*}
    \mathcal{X}:=\Big\{x\geq N: |\mathbb{P}\cap A(16^x,16^{x+\delta})|\geq D 16^{\lfloor x\rfloor}/\lfloor x\rfloor\Big\}.
\end{align*} By Lemma \ref{combLemma1}, $\mathcal{X}$ is necessarily infinite and unbounded. For every $x\in \mathcal{X}$ define $P_x$ to be a set satisfying the following conditions: \begin{align}\label{setsOfPrimes}
    P_x\subset \mathbb{P}\cap A(16^x,16^{x+\delta})\text{ and }|P_x|=\lfloor D 16^{\lfloor x\rfloor}/\lfloor x\rfloor\rfloor
\end{align} We shall use the sets $P_x$ to construct the sets $S_1$ and $S_2$.\\

The set $\mathcal{X}$ satisfies the conditions of Lemma \ref{combLemma2}, which ensures, for each $k$-tuple $\mathbf{m}:=(m_1,...,m_k)\in \mathbb{N}_{\geq x_0}^k$, the existence of some $\zeta_{\mathbf{m}},\zeta_{1,\mathbf{m}},...,\zeta_{k,\mathbf{m}}\in \mathcal{X}$ that satisfy the conditions \begin{enumerate}
    \item[($\text{I}'$)] $\zeta_{i,\mathbf{m}}\in [m_i,m_i+1)$ for all $1\leq i\leq k$, and
    \item[($\text{II}'$)] $\zeta_{1,\mathbf{m}}+....+\zeta_{k,\mathbf{m}}\in [\zeta_{\mathbf{m}},\zeta_{\mathbf{m}}+\epsilon)$.
\end{enumerate} By definition of the sets $P_x$, one has \begin{align}\label{setsOfPrimesII}|P_{\zeta_i,\mathbf{m}}|=\lfloor D16^{m_i}/m_i\rfloor.\end{align} Now, let $M=M(D,\eta,k)$ be a constant that is to be determined later, and define sets $A_1,...,A_{k}\subset \mathbb{N}_{\geq x_0}$ inductively in the following manner:\begin{enumerate} 
\item[(1)] Pick some $s_1\in \mathbb{N}$ with $s_1>\max\{x_0,2k\}$ and let $A_1$ be a finite subset of $s_1\mathbb{N}=\{s_1n: n\in \mathbb{N}\}$ satisfying $\sum_{n\in A_1}1/n\geq M$.

\item[(2)] Given $A_i$, form $A_{i+1}$ by taking any $s_{i+1}>M+\max(A_1)+...+\max(A_i)$ and set $A_{i+1}$ to be a finite subset of $s_{i+1}\mathbb{N}$ satisfying $\sum_{n\in A_{i+1}}1/n\geq M$.
\end{enumerate}

We observe that these sets $A_i$ satisfy the following property: \begin{enumerate}
    \item[(A)] For any vector $\mathbf{q}=(q_1,...,q_{k})\in A_1\times ...\times A_{k}$ and $\mathbf{r}=(r_1,...,r_{k})\in A_1\times ...\times A_k$ with $\mathbf{q}\neq \mathbf{r}$, one has the distance between the integers $q_1+...+q_{k}$ and $r_1+...+r_{k}$ being at least $M$.
\end{enumerate} This follows because $\mathbf{q}\neq \mathbf{r}$ implies that there exists some $1\leq i_*\leq k$ satisfying $q_{i_*}\neq r_{i_*}$. Choose such an $i_*$ to be maximal, then one has \begin{align*}
    \Big|\sum_{i=1}^{k}(q_i-r_i)\Big| = \Big|\sum_{\substack{1\leq i\leq k\\ q_i\neq r_i}}(q_i-r_i)\Big| &= \Big|q_{i_*}-r_{i_*}+\sum_{\substack{1\leq i\leq k\\ q_i\neq r_i\\ i\neq i_*}}(q_i-r_i)\Big|\\ &\geq |q_{i_*}-r_{i_*}|-\Big|\sum_{\substack{1\leq i\leq k\\ q_i\neq r_i\\ i\neq i_*}}(q_i-r_i)\Big|\\  &\geq |q_{i_*}-r_{i_*}|-\sum_{\substack{1\leq i<i_*\\ q_i\neq r_i}}|q_i-r_i|,
\end{align*} with the last step following from the maximality assumption. On the one hand, since $q_{i_*}$ and $r_{i_*}$ are distinct elements of $s_{i_*}\mathbb{N}$, they satisfy $|q_{i_*}-r_{i_*}|\geq s_{i_*}$. On the other hand, if $1\leq i<i_*$, one has $|q_i-r_i|\leq k\leq \max A_i$ since both $q_i$ and $r_i$ are positive integers, and hence we have \begin{align*}
    \Big|\sum_{i=1}^{k}(q_i-r_i)\Big|\geq s_{i_*}-\sum_{1\leq i<i_*}\max A_i \geq M,
\end{align*} which provides the proof for property (A).\\

\begin{center}
    \textbf{The Construction}
\end{center}
We can now construct the collections of ideals $S_1$ and $S_2$ in 
Proposition \ref{prop2.2analogue}. Recalling the definitions of the sets $P_{\zeta_{i,\mathbf{m}}}$ as given in (\ref{setsOfPrimes}) and (\ref{setsOfPrimesII}), we form $S_2$ as the set of products \begin{align*}
    S_2:=\bigcup_{\mathbf{m}\in A_1\times \cdots\times A_k}P_{\zeta_{1,\mathbf{m}}}...P_{\zeta_{k,\mathbf{m}}}=\bigcup_{\mathbf{m}\in A_1\times \cdots\times A_k}\{\mathfrak{p}_1...\mathfrak{p}_k: \mathfrak{p}_i\in P_{\zeta_{i,\mathbf{m}}}\text{ for all }1\leq i\leq k\}.
\end{align*} We note that
\begin{align*}
|P_{\zeta_{1,\mathbf{m}}}...P_{\zeta_{k,\mathbf{m}}}|\leq \frac{D^k16^{m_1+\cdots+m_k}}{m_1\cdots m_k}\leq \frac{D^k16^{m_1+\cdots+m_k}}{m_1+\cdots+m_k}< \frac{D 16^{\lfloor \zeta_{\mathbf{m}}\rfloor}}{\lfloor \zeta_{\mathbf{m}}\rfloor}
\end{align*} where the last step follows from the fact $\zeta_{1,\mathbf{m}}+\cdots+\zeta_{k,\mathbf{m}}\geq \lfloor \zeta_{\mathbf{m}}\rfloor$ (as follows from ($\text{II}'$)), and because $D^{k}<D$. Hence we have by definition of $P_{\zeta_{\mathbf{m}}}$ that $|P_{\zeta_{1,\mathbf{m}}}\cdots P_{\zeta_{k,\mathbf{m}}}|\leq |P_{\zeta_\mathbf{m}}|$. In particular, there exists some collection of ideals $Q_{\mathbf{m}}\subseteq P_{\zeta_\mathbf{m}}$ with $|Q_{\mathbf{m}}|=|P_{\zeta_{1,\mathbf{m}}}\cdots P_{\zeta_{k,\mathbf{m}}}|$. For such a choice, we then form the collection of prime ideals \begin{align*}
S_1:=\bigcup_{\mathbf{m}\in A_1\times \cdots\times A_k}Q_\mathbf{m}.
\end{align*}

\begin{proof}[Proof that $S_1$ and $S_2$ satisfy (i)] By construction, if $\mathfrak{m}\in S_2$ then there exists an integer $k$-tuple $\mathbf{m}\in A_1\times \cdots\times A_k$ such that $\mathfrak{m}=\mathfrak{p}_{\zeta_{1,\mathbf{m}}}\cdots\mathfrak{p}_{\zeta_{k,\mathbf{m}}}$ with $\mathfrak{p}_{\zeta_{i,\mathbf{m}}}\in P_{\zeta_{i,\mathbf{m}}}$. Then $\Omega(\mathfrak{m})=k$, as desired. Whereas if $\mathfrak{p}\in S_1\subset P_{\zeta_\mathbf{m}}$ then $\mathfrak{p}$ is by definition a prime ideal. \end{proof}

\noindent \textit{Proof that $S_1$ and $S_2$ satisfy (ii).} By construction, we have $P_{\zeta_{i,\mathbf{m}}}\subset A(16^{\zeta_{i,\mathbf{m}}},16^{\zeta_{i,\mathbf{m}}+\delta})$, so that 
\begin{align*}
P_{\zeta_{1,\mathbf{m}}}\cdots P_{\zeta_{k,\mathbf{m}}}&\subset A(16^{\zeta_{1,\mathbf{m}}+ \cdots +\zeta_{k,\mathbf{m}}},16^{\zeta_{1,\mathbf{m}}+...+\zeta_{k,\mathbf{m}}+k\delta})\subset A(16^{\zeta_{\mathbf{m}}},16^{\zeta_{\mathbf{m}}+2\epsilon}),
\end{align*} with the last inclusion following from property ($\text{II}'$) and the definition of $\delta$ as $\epsilon/k$. Also, by definition we have $Q_\mathbf{m}\subset P_{\zeta_{\mathbf{m}}}\subset A(16^{\zeta_{\mathbf{m}}},16^{\zeta_{\mathbf{m}}+\delta})$. Since $\delta =\epsilon/k $ we clearly have $Q_\mathbf{m}\subset A(16^{\zeta_\mathbf{m}},16^{\zeta_\mathbf{m}+2\epsilon})$.

We first claim that each ideal in $S_2$ is distinct. By unique factorization of an ideal into prime ideals, for any fixed $\mathfrak{m}\in A_1\times...\times A_k$, the set $P_{\zeta_{1,\mathfrak{m}}}...P_{\zeta_{k,\mathfrak{m}}}=\{\mathfrak{p}_1...\mathfrak{p}_k:\mathfrak{p}_i\in P_{\zeta_{i,\mathfrak{m}}}\text{ for all }1\leq i\leq k\}$ satisfies $|P_{\zeta_{1,\mathfrak{m}}}...P_{\zeta_{k,\mathfrak{m}}}|=\prod_{i=1}^k|P_{\zeta_{i,\mathfrak{m}}}|$. Hence, it suffices to prove that the sets $P_{\zeta_{1,\mathbf{m_1}}}...P_{\zeta_{k,\mathbf{m_1}}}$  and $P_{\zeta_{1,\mathbf{m_2}}}...P_{\zeta_{k,\mathbf{m_2}}}$ have empty intersection for $\mathbf{m_1}\neq \mathbf{m_2}$, both in $A_1\times ...\times A_k$. Given some ideal $\mathfrak{u}\in P_{\zeta_{1,\mathbf{m}}}...P_{\zeta_{k,\mathbf{m}}}$, we can write $\mathfrak{u}=\mathfrak{p}_{\zeta_{1,\mathbf{m}}}...\mathfrak{p}_{\zeta_{k,\mathbf{m}}}$ for a collection of prime ideals $\mathfrak{p}_{\zeta_{i,\mathbf{m}}}$ with each $\mathfrak{p}_{\zeta_{i,\mathbf{m}}}\in P_{\zeta_{i,\mathbf{m}}}$. Since $\zeta_{i,\mathbf{m}}\in [m_i,m_i+1)$ for all $1\leq i\leq j$ by point ($\text{I}'$), and $\zeta_{1,\mathbf{m}}+...+\zeta_{k,\mathbf{m}}\in [\zeta_\mathbf{m},\zeta_\mathbf{m}+\epsilon)$ by point ($\text{II}'$), we then discern that \begin{align*}
   |m_1+...+m_k-\zeta_\mathbf{m}|&\leq \Big|-\zeta_\mathbf{m}+\sum_{i=1}^k\zeta_{i,\mathbf{m}}\Big|+\sum_{i=1}^k \Big|m_i-\zeta_{i,\mathbf{m}}\Big| <\epsilon+k,
\end{align*} which alongside property (A) with the choice of $M\geq 2k+3$, say, gives that $\mathfrak{u}$ cannot possibly be in any other collection $P_{\zeta_{1,\mathbf{m'}}}...P_{\zeta_{k,\mathbf{m'}}}$.

We next claim that each ideal in $S_1$ is distinct. We must prove that the sets $Q_\mathbf{m}$ are disjoint for different $\mathbf{m}\in A_1\times ...\times A_k$; since $Q_{\mathbf{m}}\subset P_{\zeta_\mathbf{m}}$, it suffices to prove that the $P_{\zeta_\mathbf{m}}$ are disjoint for different $\mathbf{m}\in A_1\times ...\times A_k$. But this is easily apparent, since $P_{\zeta_\mathbf{m}}\subset A(16^{\zeta_\mathbf{m}},16^{\zeta_\mathbf{m}+\delta})$ by definition, and alongside property (A) with the above choice of $M$ and the bound above gives that these annuli are disjoint. 

Regarding cardinality, it is immediately clear that $S_1$ and $S_2$ have the same cardinality since $|Q_\mathbf{m}|=|P_{\zeta_{1,\mathbf{m}}}...P_{\zeta_{k,\mathbf{m}}}|$, by construction. Also, since the sets $Q_\mathbf{m}$ and $P_{\zeta_{1,\mathbf{m}}}...P_{\zeta_{k,\mathbf{m}}}$ both lie in the annulus $A(16^{\zeta_\mathbf{m}},16^{\zeta_\mathbf{m}+2\epsilon})$, it is clear that the ratio of the norm of any element of the latter with the norm of any element of the former lies between $16^{-2\epsilon}$ and $16^{2\epsilon}$. Since $16^{2\epsilon}\leq 1+\eta$, it then follows that by enumerating $P_{\zeta_{1,\mathbf{m}}}...P_{\zeta_{k,\mathbf{m}}}=\{P_i:N(P_1)\leq...\leq N(P_r)\}$ and $Q_\mathbf{m}=\{Q_i:N(Q_1)\leq ...\leq N(Q_r)\}$ we have that $(1-\eta)N(P_i)\leq N(Q_i)\leq (1+\eta)N(P_i)$ for each $j=1,...,r$. Using the fact that each $\mathbf{m}$ corresponds to a distinct annulus $A(16^{\zeta_\mathbf{m}},16^{\zeta_\mathbf{m}+\delta})$, we can then extend this enumeration property to $S_1$ and $S_2$, so that for each $Q_i\in S_2$ there exists a $P_i\in S_1$ for which $(1-\eta)N(P_i)\leq N(Q_i)\leq (1+\eta)N(P_i)$.\\
\end{proof}

\begin{proof}[Proof that $S_1$ and $S_2$ satisfy (iii).] We first prove the coprimality condition for $S_1$, as it is simpler. We must show that $\E_{\mathfrak{p}\in S_1}^{\log}\E_{\mathfrak{p}'\in S_1}^{\log}\Phi(\mathfrak{p},\mathfrak{p}')\leq \eta$, for $\Phi(\mathfrak{p},\mathfrak{p}'):=N(\operatorname{gcd}(\mathfrak{p},\mathfrak{p}'))-1$. The set $S_1$ only contains prime ideals, so it is clear that $\Phi(\mathfrak{p},\mathfrak{p}')=0$ for $\mathfrak{p}\neq \mathfrak{p}'$; hence we have \begin{align}\label{expLogiii}
    \E_{\mathfrak{p}\in S_1}^{\log }\E_{\mathfrak{p}'\in S_1}^{\log}\Phi(\mathfrak{p},\mathfrak{p}')= \frac{\sum_{\mathfrak{p},\mathfrak{p}'\in S_1}\frac{\Phi(\mathfrak{p},\mathfrak{p}')}{N(\mathfrak{p})N(\mathfrak{p}')}}{\sum_{\mathfrak{p},\mathfrak{p}'\in S_1}\frac{1}{N(\mathfrak{p})N(\mathfrak{p}')}}\leq \frac{\sum_{\mathfrak{p}\in S_1}\frac{1}{N(\mathfrak{p})}}{\Big(\sum_{\mathfrak{p}\in S_1}\frac{1}{N(\mathfrak{p})}\Big)^2}=\frac{1}{\sum_{\mathfrak{p}\in S_1}\frac{1}{N(\mathfrak{p})}}.
\end{align} By definition of $S_1$ we evidently have $\sum_{\mathfrak{p}\in S_1}\frac{1}{N(\mathfrak{p})}=\sum_{\mathbf{m}\in A_1\times ...\times A_k}\sum_{P\in Q_\mathbf{m}}\frac{1}{N(\mathfrak{p})}$, and since $Q_\mathbf{m}\subset A(16^{\zeta_\mathbf{m}},16^{\zeta_\mathbf{m}+2\epsilon})\subset A(16^{\zeta_{\mathbf{m}}},16^{m_1+...+m_k+k+3\epsilon})$ one has $N(\mathfrak{p})\leq 16^{m_1+...+m_k+k+1}$ for each $\mathfrak{p}\in Q_\mathbf{m}$, so that \begin{align*}
    \sum_{P\in S_1}\frac{1}{N(\mathfrak{p})}\geq \sum_{\mathbf{m}\in A_1\times ... \times A_k}\frac{|Q_\mathbf{m}|}{16^{k+1}\cdot 16^{m_1+...+m_k}}.
\end{align*} Then since $|Q_\mathbf{m}|=|P_{\zeta_{1,\mathbf{m}}}...P_{\zeta_{k,\mathbf{m}}}|=\prod_{i=1}^k|P_{\zeta_{i,\mathbf{m}}}|$ one has $|Q_\mathbf{m}|\geq D^k16^{-k}16^{m_1+...+m_k}/(m_1m_2...m_k)$, so that \begin{align*}
    \sum_{\mathfrak{p}\in S_1}\frac{1}{N(\mathfrak{p})}\geq \sum_{\mathbf{m}\in A_1\times ... \times A_k}\frac{D^j}{16^{2k+1}m_1...m_k}\geq \frac{D^kM^k}{16^{2k+1}}.
\end{align*} Then if $M$ is chosen sufficiently large, then $\E_{\mathfrak{p}\in S_1}^{\log}\E_{\mathfrak{p}'\in S_1}^{\log}\Phi(\mathfrak{p},\mathfrak{p}')\leq \eta$ immediately follows.\\

Now, it suffices to show that $\E_{\mathfrak{m}\in S_2}^{\log}\E_{\mathfrak{m}'\in S_2}^{\log}\Phi(\mathfrak{m},\mathfrak{m}')\leq \eta$. By definition of the set $S_2$, one has \begin{align*}
    \sum_{\mathfrak{m},\mathfrak{m}'\in S_2}\frac{\Phi(\mathfrak{m},\mathfrak{m}')}{N(\mathfrak{m})N(\mathfrak{m}')}=\sum_{\mathbf{m},\mathbf{m'}\in A_1\times...\times A_k}\sum_{\mathfrak{m}\in P_{\zeta_{1,\mathbf{m}}}...P_{\zeta_{k,\mathbf{m}}}}\sum_{\mathfrak{m}'\in P_{\zeta_{1,\mathbf{m_2}}}...P_{\zeta_{k,\mathbf{m'}}}}\frac{\Phi(\mathfrak{m},\mathfrak{m}')}{N(\mathfrak{m})N(\mathfrak{m}')}.
\end{align*} If some $\mathfrak{m}\in P_{\zeta_{1,\mathbf{m}}}...P_{\zeta_{k,\mathbf{m}}}$ and $\mathfrak{m}'\in P_{\zeta_{1,\mathbf{m'}}}...P_{\zeta_{k,\mathbf{m'}}}$ are coprime then $\Phi(\mathfrak{m},\mathfrak{m}')=0$, so do not contribute towards the sum above. Whereas if $\mathfrak{m}$ and $\mathfrak{m}'$ are not coprime, set $\operatorname{gcd}(\mathfrak{m},\mathfrak{m}'):=\mathfrak{u}$ with $\mathfrak{u}\in \prod_{i\in F}P_{\zeta_{i,\mathbf{m'}}}$ for some nonempty minimal subset $F\subset \{1,...,k\}$  and $\mathfrak{u}'\in \prod_{i\not\in F}P_{\zeta_{i,\mathbf{m'}}}$ such that $\mathfrak{m}'=\mathfrak{uu}'$; this forces $m_i=m_i'$ for all $i\in F$ since $P_{\zeta_{i,\mathbf{m}}}$ and $P_{\zeta_{i,\mathbf{m'}}}$ are otherwise disjoint. One would then have \begin{align*}
    \frac{\Phi(\mathfrak{m},\mathfrak{m}')}{N(\mathfrak{m})N(\mathfrak{m}')}=\frac{N(\mathfrak{u})-1}{N(\mathfrak{m})N(\mathfrak{m}')}\leq \frac{1}{N(\mathfrak{m})N(\mathfrak{u}')}
\end{align*} and as a result \begin{align*}
    \sum_{\mathfrak{m},\mathfrak{m}'\in S_2}\frac{\Phi(\mathfrak{m},\mathfrak{m}')}{N(\mathfrak{m})N(\mathfrak{m}')}\leq \sum_{\substack{F\subset \{1,...,k\}\\ F\neq \varnothing}}\sum_{\substack{\mathbf{m},\mathbf{m'}\in A_1\times...\times A_k\\m_i=m_i',\ \forall i\in F}}\sum_{\mathfrak{m}\in P_{\zeta_{1,\mathbf{m}}}...P_{\zeta_{k,\mathbf{m}}}}\sum_{\mathfrak{u}'\in \prod_{i\not\in F}P_{\zeta_{i,\mathbf{m}'}}}\frac{1}{N(\mathfrak{m})N(\mathfrak{u}')}.
\end{align*} Then, using the fact that $P_{\zeta_{i,\mathbf{m}}}\subset A(16^{m_i},16^{m_i+1})$ and $P_{\zeta_{i,\mathbf{m'}}}\subset A(16^{m_i'},16^{m_i'+1})$ we then have \begin{align*}
    \sum_{\mathfrak{m}\in P_{\zeta_{1,\mathbf{m}}}...P_{\zeta_{k,\mathbf{m}}}}\sum_{\mathfrak{u}'\in \prod_{i\not\in F}P_{\zeta_{i,\mathbf{m}'}}}\frac{1}{N(\mathfrak{m})N(\mathfrak{u}')}&\leq \sum_{\mathfrak{m}\in P_{\zeta_{1,\mathbf{m}}}...P_{\zeta_{j,\mathbf{m}}}}\sum_{\mathfrak{u}'\in \prod_{i\not\in F}P_{\zeta_{i,\mathbf{m}'}}}\Big(\prod_{i=1}^k \frac{1}{16^{m_i}}\Big)\Big(\prod_{i\not\in F}\frac{1}{16^{m_i'}}\Big) \\ &= \Big(\prod_{i=1}^k\frac{|P_{\zeta_{i,\mathbf{m}}}|}{16^{m_i}}\Big)\Big(\prod_{i\not\in F}\frac{|P_{\zeta_{i,\mathbf{m'}}}|}{16^{m_i'}}\Big).
\end{align*} By definition, each set $P_{\zeta_{i,\mathbf{m}}}$ has cardinality $\lfloor D16^{m_i}/m_i\rfloor$, so that \begin{align*}
    \Big(\prod_{i=1}^k\frac{|P_{\zeta_{i,\mathbf{m}}}|}{16^{m_i}}\Big)\Big(\prod_{i\not\in F}\frac{|P_{\zeta_{i,\mathbf{m'}}}|}{16^{m_k'}}\Big)\leq \Big(\prod_{i=1}^k\frac{D}{m_i}\Big)\Big(\prod_{i\not\in F}\frac{D}{m_i'}\Big),
\end{align*} and hence we have \begin{align*}
    \sum_{\mathfrak{m},\mathfrak{m}'\in S_2}\frac{\Phi(\mathfrak{m},\mathfrak{m}')}{N(\mathfrak{m})N(\mathfrak{m}')}&\leq \sum_{\substack{F\subset \{1,...,k\}\\ F\neq \varnothing}}\sum_{\substack{\mathbf{m},\mathbf{m'}\in A_1\times...\times A_k\\m_i=m_i',\ \forall i\in F}}\Big(\prod_{i=1}^k\frac{D}{m_i}\Big)\Big(\prod_{i\not\in F}\frac{D}{m_i'}\Big) \\ &= \sum_{\substack{F\subset \{1,...,k\}\\ F\neq \varnothing}}D^{2k-|F|}\Big(\prod_{i=1}^k\sum_{m\in A_i}\frac{1}{m}\Big)\Big(\prod_{i\not\in F}\sum_{m\in A_i}\frac{1}{m}\Big).
\end{align*} One also has, since $\mathfrak{m}\in P_{\zeta_{1,\mathbf{m}}}...P_{\zeta_{k,\mathbf{m}}}$ implies that $N(\mathfrak{m})\leq 16^{m_1+...+m_k+k+1}$, that $\sum_{\mathfrak{m}\in S_2}\frac{1}{N(\mathfrak{m})}\geq \sum_{\mathbf{m}\in A_1\times ...\times A_k}\frac{|P_{\zeta_{1,\mathbf{m}}}...P_{\zeta_{j,\mathbf{m}}}|}{16^{m_1+...+m_k+k+1}} $; then since $|P_{\zeta_{1,\mathbf{m}}}...P_{\zeta_{k,\mathbf{m}}}|\geq D^j16^{-k}\frac{16^{m_1+...+m_k}}{m_1m_2...m_k}$ it follows that \begin{align*}
    \sum_{\mathfrak{m}\in S_2}\frac{1}{N(\mathfrak{m})}\geq \sum_{\mathbf{m}\in A_1\times ...\times A_k}\frac{D^k }{16^{2k+1}(m_1m_2...m_k)}\geq \frac{D^k}{16^{3k}}\prod_{i=1}^k\sum_{m\in A_i}\frac{1}{m}.
\end{align*} We then deduce that \begin{align*}
    \E_{\mathfrak{m}\in S_2}^{\log}\E_{\mathfrak{m}'\in S_2}^{\log}\Phi(\mathfrak{m},\mathfrak{m}')&=\frac{\sum_{\mathfrak{m},\mathfrak{m}'\in S_2}\frac{\Phi(\mathfrak{m},\mathfrak{m}')}{N(\mathfrak{m})N(\mathfrak{m}')}}{\sum_{\mathfrak{m},\mathfrak{m}'\in S_2}\frac{1}{N(\mathfrak{m})N(\mathfrak{m}')}} \\ &\leq \frac{\sum_{\substack{F\subset \{1,...,k\}\\ F\neq \varnothing}}D^{2k-|F|}\Big(\prod_{i=1}^k\sum_{m\in A_i}\frac{1}{m}\Big)\Big(\prod_{i\not\in F}\sum_{m\in A_i}\frac{1}{m}\Big)}{\Big(\frac{D^k}{16^{3k}}\prod_{i=1}^k\sum_{m\in A_i}\frac{1}{m}\Big)^2} \\ &\leq \sum_{\substack{F\subset \{1,...,k\}\\ F\neq \varnothing}}\frac{16^{6k}D^{-|F|}}{\Big(\prod_{i\in F}\sum_{m\in A_i}\frac{1}{m}\Big)}.
\end{align*} Finally, since $\sum_{m\in A_i}\frac{1}{m}\geq M$, we see that $\E_{\mathfrak{m}\in S_2}^{\log}\E_{\mathfrak{m}'\in S_2}^{\log}\Phi(\mathfrak{m},\mathfrak{m}')\leq \eta$ for sufficiently large $M$. This completes the proof of Proposition \ref{prop2.2analogue}.
\end{proof}

\section{Appendix A}
In this section, we use Abel summation and ideal density to provide general estimates for sums over ideals.

\begin{prop}\label{AppendixAProp}
    Let $K$ be a number field of degree $d$, and let $G(x)=\sum_{N(\mathfrak{m})\leq x}g[N(\mathfrak{m})]$ for a function $g\in C^1([1,\infty))$. Then one has \begin{align*}
        G(x)=c_Kg(1)\cdot xg(x)+c_K\int_1^xg(t)\ dt+c_Kg(1)+E,
    \end{align*} where \begin{align*}
        E\ll_K |g(x)|x^{1-\frac{1}{d}}+\int_1^x|g'(t)|t^{1-\frac{1}{d}}\ dt.
    \end{align*}
\end{prop}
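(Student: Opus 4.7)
The plan is to apply Abel (partial) summation using the ideal density estimate recalled in the subsection on density constants, namely $I(t) := \sum_{N(\mathfrak{m}) \leq t} 1 = c_K t + O_K(t^{1-1/d})$. Viewing $G$ as a Stieltjes integral against the step function $I$, partial summation gives $G(x) = I(x) g(x) - \int_1^x I(t) g'(t)\, dt$, with the value at $t=1$ handled by the jump of $I$ caused by the unit ideal $(1)$ of norm $1$ (this supplies the $g(1)$ boundary contribution).

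Next I would substitute $I(t) = c_K t + O_K(t^{1-1/d})$ into both the boundary piece and the integral. The leading part becomes $c_K x g(x) - c_K \int_1^x t g'(t)\, dt$, and the remainder splits via the triangle inequality into $O_K(|g(x)| x^{1-1/d})$ from the boundary and $O_K\bigl(\int_1^x t^{1-1/d} |g'(t)|\, dt\bigr)$ from the integral, together giving exactly the error term $E$ stated in the proposition. A single further integration by parts, $\int_1^x t g'(t)\, dt = x g(x) - g(1) - \int_1^x g(t)\, dt$, then lets the two copies of $c_K x g(x)$ cancel and rearranges what remains into the claimed form involving $c_K g(1)$ and $c_K \int_1^x g(t)\, dt$.

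There is no substantive obstacle: every step is mechanical manipulation. The only point requiring any care is the bookkeeping of endpoint contributions at $t=1$, where $I$ is discontinuous, and the verification that the power-saving error $O_K(t^{1-1/d})$ from the ideal density bound survives both the boundary substitution and the integral against $|g'(t)|$ without swelling beyond the advertised $E$. Because the argument is entirely classical Abel summation applied to the ideal-counting function, no further ingredients from the body of the paper are needed.
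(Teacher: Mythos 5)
Your proposal is correct and follows essentially the same route as the paper's own proof: Abel summation against the ideal-counting function $\sum_{N(\mathfrak{m})\le t}1=c_Kt+O_K(t^{1-1/d})$, followed by the integration by parts $\int_1^x tg'(t)\,dt=xg(x)-g(1)-\int_1^x g(t)\,dt$. As you note, the two copies of $c_Kxg(x)$ cancel, so the computation actually yields $G(x)=c_K\int_1^x g(t)\,dt+c_Kg(1)+E$; the extra term $c_Kg(1)\cdot xg(x)$ in the stated formula appears to be a transcription slip in the paper (its own proof performs the same cancellation), and is harmless for the paper's applications.
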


\begin{proof}
    Let $\phi(n)$ denote the number of ideals with norm $n$ in $I_K$, then for any $x\in \mathbb{N}_0$, \begin{align*}
        G(x)=\sum_{N(\mathfrak{m})\leq x}g[N(\mathfrak{m})]=\sum_{n=1}^{x}\phi(n)g(n).
    \end{align*} Using partial summation, we can write this as \begin{align*}
        G(x)&=g(x)\sum_{n=1}^{x }\phi(n)-\int_1^xg'(t)\sum_{1\leq n\leq t}\phi(n)\ dt \\ &= g(x)\sum_{N(\mathfrak{m})\leq x}1-\int_1^xg'(t)\sum_{N(\mathfrak{m})\leq t}1\ dt \\ &=g(x)\Big(c_Kx+O_K(x^{1-\frac{1}{d}})\Big)-\int_1^xg'(t)\Big(c_Kt+O_K(t^{1-\frac{1}{d}})\Big)\ dt \\ &= c_Kxg(x)-c_K\int_1^xtg'(t)\ dt+O_K\Big(g(x)x^{1-\frac{1}{d}}+\int_1^x|g'(t)|t^{1-\frac{1}{d}}\ dt\Big) \\ &:= c_Kxg(x)-c_K\Big(xg(x)-g(1)-\int_1^xg(t)\ dt\Big)+E \\ &= c_Kg(1)\cdot xg(x)+c_K\int_1^xg(t)\ dt+c_Kg(1)+E,
    \end{align*} where \begin{align*}
        E\ll_K |g(x)|x^{1-\frac{1}{d}}+\int_1^x|g'(t)|t^{1-\frac{1}{d}}\ dt.
    \end{align*} 
\end{proof}

\section{Appendix B}

In this section we recall several well-known facts about the connection between equidistribution of a sequence and exponential sums. The most fundamental is Weyl's Criterion:

\begin{lem}[Weyl's Criterion]
    A sequence $(a_n)\subset \mathbb{R}$ is uniformly distributed modulo 1 if and only if for each $\ell\neq 0$, $\sum_{n\leq x}e(\ell a_n)=o(x)$.
\end{lem}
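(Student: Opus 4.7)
The plan is to prove Weyl's Criterion in the standard way, by approximating indicators of subintervals of $[0,1)$ by trigonometric polynomials. The forward direction is essentially immediate from the definition of uniform distribution modulo $1$: if one knows that $\frac{1}{x}\sum_{n\leq x}\mathbf{1}_{[a,b)}(\{a_n\}) \to b-a$ for every subinterval $[a,b)\subset [0,1)$, then by linearity and density of step functions among Riemann integrable functions, one has $\frac{1}{x}\sum_{n\leq x}f(\{a_n\})\to\int_0^1 f(t)\,dt$ for every Riemann integrable $f$. Applied to $f(t)=e(\ell t)$ for $\ell\neq 0$, the right-hand side is zero, which gives the claimed $o(x)$ bound.

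For the reverse direction, suppose $\sum_{n\leq x}e(\ell a_n)=o(x)$ for every nonzero integer $\ell$. Then trivially $\sum_{n\leq x}e(0\cdot a_n)=x$, so by linearity
\[
\frac{1}{x}\sum_{n\leq x}P(\{a_n\})\ \longrightarrow\ \int_0^1 P(t)\,dt
\]
for every trigonometric polynomial $P(t)=\sum_{|\ell|\leq L}c_\ell e(\ell t)$. The Stone--Weierstrass theorem (or direct Fej\'er kernel convolution) then extends this asymptotic to every continuous $1$-periodic function $f$.

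To pass from continuous test functions to indicators of intervals $[a,b)$, I would use the standard sandwich argument: given $\varepsilon>0$, construct continuous $1$-periodic functions $f^-\leq \mathbf{1}_{[a,b)}\leq f^+$ with $\int_0^1(f^+-f^-)\,dt<\varepsilon$. Averaging the two bounds along the sequence $(\{a_n\})$ and letting $x\to\infty$ yields
\[
b-a-\varepsilon\ \leq\ \liminf_{x\to\infty}\frac{1}{x}\sum_{n\leq x}\mathbf{1}_{[a,b)}(\{a_n\})\ \leq\ \limsup_{x\to\infty}\frac{1}{x}\sum_{n\leq x}\mathbf{1}_{[a,b)}(\{a_n\})\ \leq\ b-a+\varepsilon,
\]
and since $\varepsilon>0$ is arbitrary, the limit is $b-a$, establishing uniform distribution.

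The only genuine content is the Weierstrass approximation step; everything else is linearity and a sandwich. I expect no real obstacle, since this is a textbook argument and the hypothesis of the lemma is exactly the statement that all nonconstant characters on $\mathbb{R}/\mathbb{Z}$ have vanishing average along the sequence.
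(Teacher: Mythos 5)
Your proposal is correct: it is the classical Weyl argument — forward direction by testing uniform distribution against the Riemann integrable functions $e(\ell t)$, reverse direction by passing from exponentials to trigonometric polynomials by linearity, to continuous $1$-periodic functions by Weierstrass/Fej\'er approximation, and finally to indicators $\mathbf{1}_{[a,b)}$ by a two-sided continuous sandwich. There is no gap; every step you outline is the standard one and closes.

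For comparison with the paper: the paper does not prove this lemma at all — Appendix B explicitly only \emph{recalls} Weyl's Criterion as a well-known fact and gives no argument, so there is nothing to match your proof against. The only adjacent content is the second lemma of that appendix (equidistribution of an integer sequence modulo $m$), which the paper proves by expanding a square and applying Parseval over $\mathbb{Z}/m\mathbb{Z}$; that is the natural finite-group shortcut, since on a finite cyclic group the indicator of a residue class is exactly a finite linear combination of characters and no approximation step is needed. Your Weierstrass-approximation route is the right tool for the continuous circle $\mathbb{R}/\mathbb{Z}$, where indicators of intervals are not trigonometric polynomials and the sandwich argument genuinely carries the load; so your proof supplies the content the paper leaves to the literature, and the methods differ only because the underlying groups (circle versus finite cyclic) force different ways of reaching indicator functions from characters.
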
 In the positive integer setting, one also has an associated equidistribution criterion: \begin{lem}
    A sequence $(a_n)\subset \mathbb{Z}_{\geq 0}$ is equidistributed modulo $m$ if and only if $\sum_{n\leq x}e(\ell a_n/m)=o(x)$ for each $\ell \in \{1,...,m-1\}$.
\end{lem}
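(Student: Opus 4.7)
The proof is a standard application of character orthogonality on $\mathbb{Z}/m\mathbb{Z}$. The plan is to express the indicator of a residue class as a discrete Fourier sum and translate counts over residue classes into exponential sums, then read off both directions of the equivalence from the resulting identity.

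First I would invoke the orthogonality relation
$$\mathbf{1}_{a_n \equiv c\,(\mathrm{mod}\, m)} \;=\; \frac{1}{m}\sum_{\ell=0}^{m-1} e\bigl(\ell(a_n-c)/m\bigr),$$
and sum over $0 \leq n < x$ to obtain
$$\sum_{\substack{0\leq n<x \\ a_n\equiv c\,(\mathrm{mod}\, m)}} 1 \;=\; \frac{\lfloor x\rfloor}{m} \;+\; \frac{1}{m}\sum_{\ell=1}^{m-1} e(-\ell c/m)\sum_{0\leq n<x} e(\ell a_n/m).$$
For the ``if'' direction, the hypothesis $\sum_{n<x} e(\ell a_n/m) = o(x)$ for each $\ell\in\{1,\dots,m-1\}$ makes each of the finitely many (at most $m-1$) terms in the second sum $o(x)$, so the right-hand side is $x/m + o(x)$. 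Dividing by $x$ and letting $x\to\infty$ gives the equidistribution limit $1/m$ for every residue class $c$.

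For the ``only if'' direction, I would invert the Fourier decomposition, writing
$$\sum_{0\leq n<x} e(\ell a_n/m) \;=\; \sum_{c=0}^{m-1} e(\ell c/m) \sum_{\substack{0\leq n<x \\ a_n\equiv c\,(\mathrm{mod}\, m)}} 1.$$
Under the equidistribution hypothesis, the inner count equals $x/m + o(x)$ uniformly in $c$ (the modulus $m$ being fixed). Substituting and pulling out the main term yields $(x/m)\sum_{c=0}^{m-1} e(\ell c/m) + o(x)$, and the geometric-series identity $\sum_{c=0}^{m-1} e(\ell c/m) = 0$ for $\ell \not\equiv 0 \pmod{m}$ kills the main term, leaving $o(x)$.

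There is essentially no obstacle in the argument; the entire lemma is a direct consequence of orthogonality of characters of $\mathbb{Z}/m\mathbb{Z}$. The only minor bookkeeping is that $m$ is fixed and finite, so finite sums of $o(x)$ terms remain $o(x)$ and the $\lfloor x\rfloor$ versus $x$ distinction is absorbed into the error.
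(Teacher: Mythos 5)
Your proof is correct, and it reaches the lemma by a slightly different route than the paper. You expand the indicator $\mathbf{1}_{a_n\equiv c\,(\mathrm{mod}\,m)}$ via orthogonality of the additive characters of $\mathbb{Z}/m\mathbb{Z}$ and handle the two implications separately: the forward direction by summing the Fourier expansion over $n$ and absorbing the $m-1$ exponential sums into the error, the converse by sorting $n$ into residue classes and killing the main term with the geometric-series identity $\sum_{c=0}^{m-1}e(\ell c/m)=0$. The paper instead records a single Parseval identity over $\mathbb{Z}/m\mathbb{Z}$,
\begin{align*}
\sum_{\ell \in \mathbb{Z}_m}\Big|-\frac{1}{m}+\frac{1}{x}\sum_{\substack{n\leq x\\ a_n\equiv \ell\ (\text{mod }m)}}1\Big|^2=\frac{1}{m}\sum_{\ell =1}^{m-1}\Big|\frac{1}{x}\sum_{n\leq x}e(\ell a_n/m)\Big|^2,
\end{align*}
and reads off both directions at once, since each side is a finite sum of nonnegative terms and so tends to $0$ exactly when every individual term does. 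The underlying input is identical (character orthogonality on $\mathbb{Z}/m\mathbb{Z}$); your version is more explicit and avoids invoking Parseval, at the cost of two separate computations, while the paper's $L^2$ identity is more compact and makes the equivalence visibly symmetric. Your bookkeeping remarks (finitely many $o(x)$ terms, $\lfloor x\rfloor$ versus $x$) are exactly the right things to note and are harmless.
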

\begin{proof}
    After expanding out the square and applying Parseval's theorem over $\mathbb{Z}_m$, one observes that \begin{align*}\sum_{\ell \in \mathbb{Z}_m}\Big|-\frac{1}{m}+\frac{1}{x}\sum_{\substack{n\leq x\\ a_n\equiv \ell\ (\text{mod }m)}}1\Big|^2=\frac{1}{m}\sum_{\ell =1}^{m-1}\Big|\frac{1}{x}\sum_{n\leq x}e(\ell a_n/m)\Big|^2.\end{align*} From this it is not hard to deduce the lemma.
\end{proof}

\section{Appendix C}

In this section we show how $L(x)=o(x)$ implies that $M(x)=o(x)$, as defined in the Introduction.

\begin{lem}
    Let $K$ be an algebraic number field. Then $L(x)=o(x)$ implies that $M(x)=o(x)$.
\end{lem}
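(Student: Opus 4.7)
The plan is to reduce $M(x)$ to a convolution against $L$ using the elementary identity
\[
\mu(\mathfrak{m}) \;=\; \sum_{\mathfrak{d}^{2}\mid \mathfrak{m}} \mu(\mathfrak{d})\,\lambda(\mathfrak{m}/\mathfrak{d}^{2}),
\qquad \lambda(\mathfrak{m}):=(-1)^{\Omega(\mathfrak{m})},
\]
and then to split the resulting sum into a main piece (where the hypothesis $L(y)=o_{K}(y)$ gives savings) and a tail (where a trivial bound is enough). The identity itself comes from writing $\mu(\mathfrak{m})=\lambda(\mathfrak{m})\mu^{2}(\mathfrak{m})$, using the Möbius-type identity $\mu^{2}(\mathfrak{m})=\sum_{\mathfrak{d}^{2}\mid \mathfrak{m}}\mu(\mathfrak{d})$ (immediate at prime-power ideals from unique factorization in $I_{K}$, and multiplicative), together with the complete multiplicativity of $\lambda$ on $I_{K}$, which gives $\lambda(\mathfrak{d}^{2})=1$ and therefore $\lambda(\mathfrak{m})=\lambda(\mathfrak{m}/\mathfrak{d}^{2})$.

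Summing the identity over $N(\mathfrak{m})\leq x$ and writing $\mathfrak{m}=\mathfrak{d}^{2}\mathfrak{e}$ with $N(\mathfrak{e})\leq x/N(\mathfrak{d})^{2}$ gives
\[
M(x) \;=\; \sum_{N(\mathfrak{d})\leq \sqrt{x}} \mu(\mathfrak{d})\,L\!\bigl(x/N(\mathfrak{d})^{2}\bigr).
\]
Fix $\varepsilon>0$ and, using the hypothesis, pick $Y=Y(\varepsilon)$ so that $|L(y)|\leq \varepsilon y$ for $y\geq Y$. Split the $\mathfrak{d}$-sum at $N(\mathfrak{d})=\sqrt{x/Y}$. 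In the range $N(\mathfrak{d})\leq \sqrt{x/Y}$ the bound $|L(x/N(\mathfrak{d})^{2})|\leq \varepsilon x/N(\mathfrak{d})^{2}$ applies, contributing at most
\[
\varepsilon x \sum_{\mathfrak{d}\in I_{K}} \frac{1}{N(\mathfrak{d})^{2}} \;=\; O_{K}(\varepsilon x),
\]
since the series converges (an elementary partial-summation argument against $\sum_{N(\mathfrak{m})\leq y}1=c_{K}y+O_{K}(y^{1-1/d})$, essentially Proposition \ref{AppendixAProp}). In the range $N(\mathfrak{d})>\sqrt{x/Y}$ the trivial bound $|L(y)|\ll_{K} y$ and the tail estimate $\sum_{N(\mathfrak{d})>D}N(\mathfrak{d})^{-2}\ll_{K} 1/D$ (again partial summation against the ideal density) yield a contribution of $O_{K}\bigl(x\cdot\sqrt{Y/x}\bigr)=O_{K}(\sqrt{xY})$.

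Combining gives $|M(x)|\leq C_{K}\varepsilon x+O_{K}(\sqrt{xY})$, so $\limsup_{x\to\infty}|M(x)|/x\leq C_{K}\varepsilon$; since $\varepsilon>0$ was arbitrary, $M(x)=o_{K}(x)$. The only step with any real subtlety is the convergence and tail estimate for $\sum_{\mathfrak{d}}N(\mathfrak{d})^{-2}$, both of which are standard consequences of the power-saving ideal-density bound already used throughout the paper; the rest is bookkeeping.
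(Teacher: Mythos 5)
Your proof is correct and follows essentially the same route as the paper: both arguments write $M(x)=\sum_{N(\mathfrak{d})^{2}\le x} g(\mathfrak{d}^{2})\,L\big(x/N(\mathfrak{d})^{2}\big)$ for a $1$-bounded arithmetic function supported on square ideals, then split that sum at the threshold where the hypothesis $L(y)=o_K(y)$ applies and finish with the convergence (and tail decay) of $\sum_{\mathfrak{d}}N(\mathfrak{d})^{-2}$ coming from the ideal-density estimate. The only difference is how the convolving function is identified --- you exhibit it explicitly as $\mu(\mathfrak{d})$ at $\mathfrak{d}^{2}$ via $\mu=\lambda\cdot\mu^{2}$, $\mu^{2}(\mathfrak{m})=\sum_{\mathfrak{d}^{2}\mid\mathfrak{m}}\mu(\mathfrak{d})$ and complete multiplicativity of $\lambda$, while the paper arrives at the same function as the Dirichlet inverse $\mathbf{1}_{\square}^{-1}$ through an induction at prime powers --- which is a streamlining of the same idea rather than a different method.
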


\begin{proof}
    We claim the following two statements are true (here $*$ denotes the Dirichlet convolution operator): \begin{enumerate}
    \item[(i)] Suppose that $\mathbf{1}_{\square}$ is the indicator function for the square ideals. Then if $\mathbf{1}_{\square}^{-1}$ denotes the Dirichlet inverse of $\mathbf{1}_{\square}$ --- i.e. the function such that $(\mathbf{1}_\square*\mathbf{1}_\square^{-1})(\mathfrak{m}):=\sum_{\mathfrak{d}_1\mathfrak{d}_2=\mathfrak{m}}\mathbf{1}_\square(\mathfrak{d}_1)\mathbf{1}_\square^{-1}(\mathfrak{d}_2)=\begin{cases}
        1 & \mathfrak{m}=K \\0 & \text{otherwise}
    \end{cases}$ --- then $\mathbf{1}_\square^{-1}$ is 1-bounded and $\text{supp }\mathbf{1}_\square\supseteq\text{supp }\mathbf{1}_\square^{-1}$.
    \item[(ii)] Let $\mu$ and $\lambda$ be the M\"{o}bius and Liouville functions over ideals in $I_K$, respectively. Let $g$ be the function such that $\mu=g*\lambda$. Then $g=\mathbf{1}_{\square}^{-1}$.
\end{enumerate} Let us explain how these two statements together imply the result. Writing $\mu=g*\lambda$, it is not hard to see then that \begin{align*}
M(x)=\sum_{N(\mathfrak{m})\leq x}g(\mathfrak{m})L\Big(\frac{x}{N(\mathfrak{m})}\Big).
\end{align*} If $g=(\mathbf{1}_{\square})^{-1}$ then by (i) $g$ is 1-bounded with support on square ideals, so that \begin{align*}
    \frac{M(x)}{x}=\frac{1}{x}\sum_{N(\mathfrak{m})^2\leq x}g(\mathfrak{m}^2)L\Big(\frac{x}{N(\mathfrak{m})^2}\Big)=\sum_{N(\mathfrak{m})^2\leq x}\frac{g(\mathfrak{m}^2)}{N(\mathfrak{m})^2}\cdot \frac{L(x/N(\mathfrak{m})^2)}{x/N(\mathfrak{m})^2}.
\end{align*} If $L(x)=o(x)$, then for each $\epsilon>0$ there exists some $C(\epsilon)$ for which $x\geq C(\epsilon)\implies |L(x)/x|<\epsilon$, and hence \begin{align*}
    \Big|\frac{M(x)}{x}\Big|&\leq \sum_{N(\mathfrak{m})^2\leq x}\frac{1}{N(\mathfrak{m})^2}\Big|\frac{L(x/N(\mathfrak{m})^2)}{x/N(\mathfrak{m})^2}\Big|\\ &=\sum_{N(\mathfrak{m})^2\leq x/C(\epsilon) }\frac{1}{N(\mathfrak{m})^2}\Big|\frac{L(x/N(\mathfrak{m})^2)}{x/N(\mathfrak{m})^2}\Big|+\sum_{x/C(\epsilon)<N(\mathfrak{m})^2\leq x}\frac{1}{N(\mathfrak{m})^2}\Big|\frac{L(x/N(\mathfrak{m})^2)}{x/N(\mathfrak{m})^2}\Big|\\ &< \epsilon \sum_{N(\mathfrak{m})^2\leq x/C(\epsilon)}\frac{1}{N(\mathfrak{m})^2}+\sum_{x/C(\epsilon)<N(\mathfrak{m})^2\leq x}\frac{1}{N(\mathfrak{m})^2}.
\end{align*} Then since $\sum_{\mathfrak{m}}\frac{1}{N(\mathfrak{m})^2}<\infty$ (as can be seen by Proposition \ref{AppendixAProp}), for sufficiently large $x$ we can make this arbitrarily small to show that $M(x)=o(x)$.\\

\textit{Proof of (i).} Since $\mathbf{1}_\square$ is multiplicative, one has, for $\mathfrak{m}=\prod_{i}\mathfrak{p}_i^{\alpha_i}$ that $\mathbf{1}_\square(\mathfrak{m})=\prod_i \mathbf{1}_{2|\alpha_i}$. By definition of Dirichlet inverse, for each $\alpha_i\geq 1$ one has $0=\sum_{\mathfrak{d}_1\mathfrak{d}_2=\mathfrak{p}_i^{\alpha_i}}\mathbf{1}_\square(\mathfrak{d}_1)f^{-1}(\mathfrak{d}_2)=\sum_{c=0}^{\alpha_i}\mathbf{1}_\square(\mathfrak{p}_i^c)\mathbf{1}_\square^{-1}(\mathfrak{p}_i^{\alpha_i-c})$. Then \begin{align*}
    0=\sum_{c=0}^{\lfloor \alpha_i/2\rfloor}\mathbf{1}_\square^{-1}(\mathfrak{p}_i^{\alpha_i-2c}).
\end{align*} At $\alpha_i=0$ one has $\mathbf{1}_\square^{-1}(1)=1$; at $\alpha_i=1$ one has $\mathbf{1}_\square^{-1}(\mathfrak{p}_i)=0$ by the above expression. Now let us induct on $\alpha_i$ to show that $\mathbf{1}_\square^{-1}(\mathfrak{p}_i^{\alpha_i})=0$ for $\alpha_i\equiv 1\ (\text{mod }2)$. The base case is shown; suppose that there exists some even $k$ for which $\mathbf{1}_\square^{-1}(\mathfrak{p}_i^{\alpha_i})=0$ for each $\alpha_i\leq k$ odd. One has $0=\sum_{c=0}^{\lfloor (k+1)/2\rfloor}\mathbf{1}_\square^{-1}(\mathfrak{p}_i^{k+1-2c})=\mathbf{1}_\square^{-1}(\mathfrak{p}_i^{k+1})$, so that $\mathbf{1}_\square^{-1}(\mathfrak{p}_i^{k+1})=0$ as well. This provides that $\text{supp }\mathbf{1}_\square^{-1}\subseteq \text{supp }\mathbf{1}_\square$, since $\mathbf{1}_\square^{-1}$ is multiplicative (in general, the Dirichlet inverse of a multiplicative function is multiplicative; see, e.g., \cite{apostolNT}, Thm. 2.16).

To show boundedness, we again use the above expression for prime power ideals, i.e. that $0=\sum_{c=0}^{\lfloor \alpha_i/2\rfloor}\mathbf{1}_\square^{-1}(\mathfrak{p}_i^{\alpha_i-2c})$. It is clear that $\mathbf{1}_\square(1)=\mathbf{1}_\square^{-1}(1)=1$, hence evaluating at $\alpha_i=2$ gives $\mathbf{1}_\square^{-1}(\mathfrak{p}_i^2)=-1$. Then one has $\mathbf{1}_\square^{-1}(\mathfrak{p}_i^4)=-\sum_{1\leq c\leq 2}\mathbf{1}_\square^{-1}(\mathfrak{p}_i^{\alpha_i-2c})=-(-1+1)=0$, and $\mathbf{1}_\square^{-1}(\mathfrak{p}_i^6)=-\sum_{1\leq c\leq 3}\mathbf{1}_\square^{-1}(\mathfrak{p}_i^{\alpha_i-2c})=-(0-1+1)=0$. It is not hard to see that $\mathbf{1}_\square^{-1}(\mathfrak{p}_i^{2k})=\begin{cases}
    1 & k = 0 \\
    -1 & k=1 \\
    0 & k\geq 2.
\end{cases}$ by induction; then since $\mathbf{1}_\square^{-1}$ is multiplicative, this provides a proof of boundedness since $|\mathbf{1}_\square^{-1}(\mathfrak{m})|=\prod_{\mathfrak{p}|\mathfrak{m}}|\mathbf{1}_\square^{-1}(\mathfrak{p}^{x_\mathfrak{p}(\mathfrak{m})})|\leq 1$.

\textit{Proof of (ii).} We first claim that $\lambda*1=\mathbf{1}_{\square}$. Indeed, it suffices to prove that $\mathbf{1}_{\square}*\mu=\lambda$. One has \begin{align*}
    (\mathbf{1}_{\square}*\mu)(\mathfrak{m})=\sum_{\mathfrak{u\mathfrak{d}^2=\mathfrak{m}}}\mu(\mathfrak{u}).
\end{align*} Write $\mathfrak{m}=\mathfrak{u}_0\mathfrak{d}_{max}^2$ for some $\mathfrak{u}_0$ squarefree, then the above is the same as saying \begin{align*}
    (\mathbf{1}_{\square}*\mu)(\mathfrak{m})=\sum_{\mathfrak{d}_1\mathfrak{d}_2=\mathfrak{d}_{max}}\mu(\mathfrak{u}_0\mathfrak{d}_2^2).
\end{align*} Each summand vanishes other than the case $\mathfrak{d}_2=1$, so that $(\mathbf{1}_{\square}*\mu)(\mathfrak{m})=\mu(\mathfrak{u}_0)=(-1)^{\Omega(\mathfrak{u}_0)}$. Then since $\Omega(\mathfrak{u}_0)\equiv \Omega(\mathfrak{m})\ (\text{mod }2)$, we have our desired equivalence.\\  Then convolving the equation $\lambda*1=\mathbf{1}_{\square}$ with $g$ would give $\lambda*g*1=g*\mathbf{1}_{\square}$, and since $\lambda*g=\mu$ by definition one would have $g*\mathbf{1}_{\square}=\mu*1=\delta$. But then $g=\mathbf{1}_{\square}^{-1}$, by definition and uniqueness of the Dirichlet inverse.
\end{proof}

\section*{Acknowledgements}

The author extends gratitude for the idea from Michael Lacey, and for discussions thereof. The author also thanks Alex Dunn and Vesselin Dimitrov for information regarding number fields, and Florian Richter and Vitaly Bergelson for their encouraging comments. Finally, the author thanks Rahul Sethi and Junzhe Mao for feedback on early drafts, and the anonymous referee for their many helpful comments.\\


\begin{bibdiv}
    \begin{biblist}

\bib{landau}{article}{
author = {Landau, E.},
title = {Einf\"uhrung in die elementare und analytische Theorie der algebraischen Zahlen und der Ideale},
year = {1918},
journal = {Teubner}
}

\bib{Richter_2021}{article}{
	doi = {10.1112/blms.12503},
	year = {2021},
	month = {may},
	publisher = {Wiley},
	volume = {53},
	number = {5},
	pages = {1365--1375},
	author = {Florian K. Richter},
	title = {A new elementary proof of the Prime Number Theorem},
	journal = {Bulletin of the London Mathematical Society}
}

\bib{montgomeryVaughan}{book}{
author = {Hugh Montgomery and Robert Vaughan},
title = {Multiplicative Number Theory: I. Classical Theory},
publisher = {Cambridge studies in advanced mathematics}
}

\bib{marcus}{book}{
author = {Marcus, Daniel A.},
title = {Number Fields (Second Edition)},
year = {1977},
publisher = {Springer}}

\bib{dummitFoote}{book}{
author = {David S. Dummit and Richard Foote},
title = {Abstract Algebra, 3rd Edition},
year = {1991},
publisher = {John Wiley \& Sons, Inc.}}

\bib{diamondZhangEquivalences}{article}{
author = {Harold G. Diamond and Wen-Bin Zhang},
title = {Prime Number Theorem Equivalences and Non-Equivalences},
journal = {Mathematika},
date = {2017},
volume = {63},
pages = {852-862}}

\bib{pillai}{article}{
author = {S. Pillai},
title = {Generalisation of a theorem of Mangoldt},
journal = {Proc. Indian Acad. Sci.}, volume = {Sect. A. 11},
date = {1940}, pages = {13–20}
}

\bib{selbergErgodic}{article}{
author = {A. Selberg}, title = {An elementary proof of the prime-number theorem}, journal = {Ann. of Math.}, volume = {50}, year = {1949}, pages = {305–313}, doi = {https://doi.org/10.2307/1969455}}

\bib{erdosAdditive}{article}{
author = {P. Erd\H{o}s}, title = {On the distribution function of additive functions}, journal = {Ann. of Math.}, volume = {47}, date = {1946}, pages = {1–20}, doi = { https://doi.org/10.2307/1969031.}
}

\bib{delange}{article}{
author= {H. Delange}, title = {On some arithmetical functions}, journal = {Illinois J. Math.}, date =  {1958}, pages = {81–87}}

\bib{green2008linear}{article}{
      title={Linear Equations in Primes}, 
      author={Ben Green and Terence Tao},
      journal={Annals of Mathematics},
      year={2008},
      eprint={math/0606088},
      archivePrefix={arXiv},
      primaryClass={math.NT}
}

\bib{bergelson2023dynamical}{article}{
    AUTHOR = {Vitaly Bergelson and Florian K. Richter},
     TITLE = {Dynamical generalizations of the prime number theorem and
              disjointness of additive and multiplicative semigroup actions},
   JOURNAL = {Duke Math. J.},
  FJOURNAL = {Duke Mathematical Journal},
    VOLUME = {171},
      YEAR = {2022},
    NUMBER = {15},
     PAGES = {3133--3200},
      ISSN = {0012-7094,1547-7398},
   MRCLASS = {37A44 (11J71)},
  MRNUMBER = {4497225},
MRREVIEWER = {Joseph\ Andrew\ Vandehey},
       DOI = {10.1215/00127094-2022-0055}
}

\bib{abstract}{book}{
title = {Abstract Analytic Number Theory},
author = {John Knopfmacher},
year = {1975},
publisher = {Elsevier}}

\bib{apostolNT}{book}{
title = {Introduction to Analytic Number Theory},
author = {Tom Apostol},
year = {1976},
publisher = {Springer}}

\bib{mertens}{article}{
title = {Ein Beitrag zur analytischen Zahlentheorie},
year = {1874},
volume = {78},
journal = {J. reine angew. Math.},
pages = {46-62},
author = {F. Mertens}
}

    \end{biblist}
\end{bibdiv}

\end{document}